\journal{Journal of Mathematical Analysis and Applications}
\newtheorem{theorem}{Theorem}[section]
\newtheorem*{theorem-non}{Theorem}
\newtheorem{lemma}[theorem]{Lemma}
\newtheorem*{proposition-non}{Proposition}
\newtheorem{prop}[theorem]{Proposition}
\newtheorem{coro}[theorem]{Corollary}
\newtheorem{definition}[theorem]{Definition}
\newtheorem{example}[theorem]{Example}
\newtheorem{conjecture}[theorem]{Conjecture}
\newtheorem{obs}[theorem]{Observation}
\newtheorem*{obs-non}{Observation}
\newtheorem*{conjecture-non}{Conjecture}
\theoremstyle{remark}
\newtheorem*{question-non}{Question}
\newtheorem{question}[theorem]{Question}
\newtheorem{claim}[theorem]{Claim}
\newtheorem{remark}[theorem]{Remark}
\newcommand{\minus}{\scalebox{0.75}[1.0]{$-$}}
\def\cl{\overline}
\def\LH{\stackrel{*}=}
\def\sq22{\sqrt{2-2\cos(t)}}
\def\K{\mathcal{K}}
\def\interior{\operatorname{int}}
\def\conv{\operatorname{conv}}
\def\Bis{\operatorname{Bis}}
\def\Id{\operatorname{Id}}
\def\MUP{Mazur-Ulam Property}
\def\R{\mathbb R}
\def\B{\mathcal B}
\def\el2{\mathcal D}
\def\e{\varepsilon}
\def\norma{\|\cdot\|}
\def\nX{\|\cdot\|_X}
\def\XnX{(X,\|\cdot\|_X)}
\def\RnX{(\R^n,\|\cdot\|'_X)}
\def\nY{\|\cdot\|_Y}
\def\YnY{(Y,\|\cdot\|_Y)}
\def\RnY{(\R^n,\|\cdot\|'_Y)}
\def\tauSXY{\tau:S_X\to S_Y}
\def\tauSXX{\tau:S_X\to S_X}
\def\phiSXX{\phi:S_X\to S_X}
\def\tauXY{\widetilde{\tau}:X\to Y}
\def\tauXYn{\widetilde{\tau}:\XnX\to\YnY}
\def\wtau{\widetilde\tau}
\begin{document}

\begin{frontmatter}

\title{A reflection on Tingley's problem and some applications}

\author
{Javier~Cabello~Sánchez } 
\address{Departamento de Matem\'{a}ticas, Universidad de Extremadura, 
Avda. de Elvas s/n, 06006 Badajoz. Spain. 
{\em Dedicated to the memory of Professor Carlos Benítez.}}
\ead{coco@unex.es}

\begin{abstract}
In this paper we show how some metric properties of the unit 
sphere of a normed space can help to approach a solution to Tingley's problem. 
In our main result we show that if an 
onto isometry between the spheres of strictly convex spaces is the identity when 
restricted to some relative open subset, then it is the identity. This implies that an onto 
isometry between the unit spheres of strictly convex finite dimensional spaces 
is linear if and only it is linear on a relative open set. We prove the same for arbitrary 
two-dimensional spaces and obtain that every two-dimensional, non strictly convex, 
normed space has the Mazur-Ulam Property. 

We also include some other less general, yet interesting, results, 
along with a generalisation of curvature to normed spaces. 
\end{abstract}

\begin{keyword} 
Mazur-Ulam property\sep metric invariants\sep strictly convex spaces\sep curvature.
\MSC{52A10, 46B04}
\end{keyword}

\end{frontmatter}


\section{Introduction}

Since Tingley's seminal paper~\cite{tingley}, a lot of work has been done trying to answer this: 

\begin{question}\label{qt}
Let $(X,\nX), (Y,\nY)$ be normed spaces and $\tauSXY$ a surjective isometry 
between their unit spheres. Is $\tau$ the restriction of a linear isometry $\tauXY$?
\end{question}

This Question is widely known as Tingley's problem. 
The main result in Tingley's paper~\cite{tingley} is what we 
will call {\em Tingley's Theorem} throughout the paper: 

\begin{theorem}[Tingley, \cite{tingley}]\label{Tingley}
Suppose that $S$ and $S'$ are the unit spheres of finite dimensional Banach spaces. 
If $f:S\to S'$ is an onto isometry, then $f(-x)=-f(x)$ for all $x$ in $S$. 
\end{theorem}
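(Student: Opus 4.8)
The plan is to deduce antipode-preservation from one soft fact --- a surjective isometry between spheres sends a pair at maximal distance to a pair at maximal distance --- sharpened by a purely metric description, valid in finite dimensions, of which diametral pairs on a sphere are the antipodal ones. First I would record the easy half. In any normed space the unit sphere has diameter exactly $2$, since $\|u-v\|\le\|u\|+\|v\|=2$ for $u,v\in S$ while $\|x-(-x)\|=2$. Because $S$ and $S'$ are compact and $f$ is a surjective isometry, $f$ is a homeomorphism (so $\dim X=\dim Y=:n$) and preserves all distances; applied to $\{x,-x\}$ this gives $\|f(x)-f(-x)\|=2$, so $f(x)$ and $f(-x)$ realise the diameter of $S'$. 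If $X$ and $Y$ were strictly convex this would already finish the proof: $\tfrac12\bigl(f(x)-f(-x)\bigr)=\tfrac12\bigl(f(x)+(-f(-x))\bigr)$ has norm $1$, so it is a midpoint of two points of $S'$ lying on $S'$, whence strict convexity gives $f(x)=-f(-x)$. The genuine content of the theorem is therefore the non-strictly-convex case, where $\|u-v\|=2$ with $u,v\in S$ does \emph{not} force $v=-u$ (for instance $(1,0)$ and $(-1,1)$ in $(\R^2,\|\cdot\|_\infty)$), and the antipodal pair must be singled out among the diametral pairs by something genuinely metric.

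The gadget I would use is the metric segment $[u,v]_S:=\{z\in S:\|u-z\|+\|z-v\|=\|u-v\|\}$, and the key lemma is: \emph{for $x$ in the unit sphere of an $n$-dimensional normed space, $-x$ is the unique $y\in S$ with $\|x-y\|=2$ for which $[x,y]_S$ is $\mathcal H^{n-1}$-negligible} (equivalently, $[x,y]_S$ being closed, nowhere dense in $S$). Granting the lemma, the theorem follows at once: $f$ is a bijective isometry, so it preserves Hausdorff measure and carries metric segments onto metric segments, whence $[f(x),f(-x)]_{S'}=f\bigl([x,-x]_S\bigr)$ is $\mathcal H^{n-1}$-negligible while $\|f(x)-f(-x)\|=2$; the lemma applied inside $Y$ then identifies $f(-x)$ as $-f(x)$.

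It remains to prove the lemma, and this is the only real obstacle. For the direction that $[x,-x]_S$ is negligible: the convex function $h(z):=\|x-z\|+\|x+z\|$ satisfies $h(z)\ge\|(x-z)-(x+z)\|=2\|z\|$, attains its minimum value $2$, and $\{h=2\}=\{h\le 2\}$ is a symmetric convex body contained in $B_X$ with $\pm x$ on its boundary; since $[x,-x]_S=\{h=2\}\cap\partial B_X$, I would show this intersection is $\mathcal H^{n-1}$-negligible by observing that $h(z)=2$ on $\partial B_X$ forces $x-z$ and $-(x+z)$ to share a unit supporting functional $\psi_z$ --- so $-\psi_z$ supports $B_X$ at $z$ --- and then ruling out that such $z$ fill a relatively open patch of the sphere; making this survive arbitrarily non-smooth, non-strictly-convex spheres is the crux. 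For the converse, given $y\in S$ with $\|x-y\|=2$ and $y\ne-x$, Hahn--Banach supplies $\psi\in S_{X^*}$ with $\psi(x)=1=-\psi(y)$; since $y\ne-x$ the face $\{z\in S:\psi(z)=-1\}$ is non-degenerate, and exploiting the finite-dimensional face structure of $B_X$ one produces a relatively open patch of a facet lying ``between'' $x$ and $y$ inside $[x,y]_S$, so $[x,y]_S$ is not negligible. The case $n=1$ is immediate, and finite-dimensionality is used exactly through the compactness of $S$ --- which makes $f$ a homeomorphism and lets nowhere-density, faces, and Hausdorff measures transfer --- and through Hahn--Banach separation.
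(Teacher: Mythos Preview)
First, note that the paper does not prove this theorem --- it is quoted from Tingley's original article and used as a black box --- so your proposal must stand on its own. The reduction you outline is clean, but the key lemma is false once $n\ge 3$. Take $X=(\R^3,\|\cdot\|)$ with $\|(a,b,c)\|=\sqrt{a^2+b^2}+|c|$, the $\ell_1$-sum of the Euclidean plane and the line, whose unit ball is a bicone. With $x=(0,0,1)$ and $y=(1,0,0)$ one has $\|x-y\|=2$ and $y\neq -x$, yet the affine segment $[x,y]=\{(t,0,1-t):0\le t\le 1\}$ lies entirely in $S_X$; since the unit functional norming $x-y$ is \emph{uniquely} $\psi=(-1,0,1)$, a two-line computation forces every $z\in[x,y]_S$ to satisfy $z_2=0$, $z_1\ge 0$, $z_3\ge 0$, so that $[x,y]_S=[x,y]$: a one-dimensional arc in the two-dimensional sphere, hence $\mathcal H^{2}$-null and nowhere dense. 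Meanwhile $[x,-x]_S=\{\pm x\}$. Thus both $-x$ and $y$ have negligible metric segment to $x$, and the uniqueness half of your lemma collapses.

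The failure sits exactly where you flagged the argument as incomplete. In your direction-2 sketch you observe that the face $\{z:\psi(z)=-1\}$ is ``non-degenerate'' and then hope to locate inside $[x,y]_S$ a relatively open patch of a facet. In the bicone that exposed face is the segment $[y,-x]$, which is indeed non-degenerate but of dimension $1<n-1$, so no $(n-1)$-dimensional patch is available. The obstruction is intrinsic to any ball whose maximal faces are edges rather than facets, and switching between ``$\mathcal H^{n-1}$-null'' and ``nowhere dense'' does not help, since the counterexample defeats both. (Your lemma \emph{does} seem to survive when $n=2$, where every nontrivial face is automatically a facet, which may explain the intuition.) The standard proofs of Tingley's theorem avoid this trap by working instead with the diametral set $\{y:\|x-y\|=2\}$ itself --- which Remark~\ref{el2star} identifies with $-\{y:[x,y]\subset S\}$ --- and the face structure of the ball, rather than with metric intervals.
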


Of course, this is not the first question about the linearity of isometries. 
Namely, since Mazur-Ulam Theorem, see~\cite{MazurUlam}, we are aware of the fact that 
every surjective isometry $\tauXYn$ is affine. The relation between the Theorem and the 
Question has led to state (see, e.g.,\cite{chengdong}) that $\XnX$ has the Mazur-Ulam 
Property when the answer to Question~\ref{qt} is affirmative for every $\YnY$. 
As a consequence of Mazur-Ulam Theorem, Question~\ref{qt} may be replaced by the following:

$\bullet$ 
Is $\tau$ the restriction of an isometry $\tauXY$ such that $\wtau(0)=0$?

The next natural step could have been the question about surjective isometries 
between unit balls, but Mankiewicz, in~\cite{Mankiewicz}, showed that in the 
Mazur-Ulam Theorem the surjective isometry does not need to be defined on the 
whole space $X$ or even on the unit ball: if we consider two closed convex 
bodies $F_X\subset X$ and $F_Y\subset Y$, every onto isometry $\tau:F_X\to F_Y$ 
is affine --it is a little more general, actually. 
So, Tingley's problem can be then restated as: 

$\bullet$ 
Is $\tau$ the restriction of an isometry $\wtau:B_X\to B_Y$ such that $\wtau(0)=0$?

It seems that, if every space has the Mazur-Ulam Property, the last question 
of this kind will be, in the spirit of Mankiewicz's result:

$\bullet$ 
If $F_X$ and $F_Y$ are convex bodies and $\tau:\partial F_X\to \partial F_Y$ 
is an onto isometry between their boundaries, does $\tau$ extend linearly? 

This problem has been dealt in several ways and lots of positive answers 
have been found, see, 
e.g.,~\cite{BCFP18, CP18, ding2002, ding2013, 
fangwang2006, PeraltaNG, FPeralta2017, FPeralta2018CBH, FPeralta2018Low, 
FPeralta2018vN, JVMCPR19, KadetsMiguel, li2016, 
Mori18, MoriOzawa19, Peralta2018Acta, Peralta2019RevMC, 
PeraTa2019, CLTanLiu, tanaka2014, Tanaka2016matr, 
Tanaka2017vN, wanghuang} --it is really 
impressive the development of machinery and technics that this problem has led to. 

Anyway, all usual approaches share a common procedure: take some more o less 
concrete normed space $(X,\nX)$ and its unit sphere $S_X$, suppose that for 
some $(Y,\nY)$ --with or without further assumptions on $\YnY$-- there is some 
onto isometry $\tauSXY$, analyse some properties of the involved norms and show somehow 
that the homogeneous extension of $\tau$ is either isometric or linear. 
This is enough because, in this setting, linear implies isometric and vice versa. 
When we say {\em the homogeneous extension} we refer to 
$\wtau:X\to Y$ defined as $\wtau(\lambda x)=\lambda\tau(x)$ for every 
$\lambda\geq 0, x\in S_X$. 

\vspace{4mm}

Our approach will follow a different way: if $\tau$ is the restriction 
of a linear isometry, in particular $\tau$ {\em must be linear}. We mean 
that, whenever $x,x'\in S_X$ and $\lambda, \lambda'\in \R$ are such that 
$\lambda x+\lambda'x'\in S_X$, the point $\lambda\tau(x)+\lambda'\tau(x')$ 
must belong to $S_Y$ and the equality 
$$\tau(\lambda x+\lambda'x')=\lambda\tau(x)+\lambda'\tau(x')$$
must hold. So, taking coordinates with respect to well chosen bases, we will have 
that if $\tau$ is the restriction of some linear isometry then its representation 
in coordinates must be the identity. Of course, the identity of a sphere is the 
restriction of a linear isometry, so this is a necessary and sufficient 
condition for $\tau$ to be the restriction of a linear isometry. 

As our immediate goal is not to detail thoroughly our method, we will try to 
explain it in its simplest form. Consider some two-dimensional normed space 
$(X,\nX)$ and suppose that there is a surjective isometry 
$\tauSXY$ for some $(Y,\nY)$ --it must be two-dimensional, too. 
We may take a basis $\B_X=\{x_1,x_2\}\subset S_X$ and identify {\em linearly} isometrically 
$(X,\nX)$ with $(\R^2,\nX')$ and $\B_X$ with the usual basis $\B_2=\{e_1,e_2\}$ 
in $\R^2$: 
$$\phi_X(\lambda_1x_1+\lambda_2x_2)=(\lambda_1,\lambda_2), \quad 
\|(\alpha_1,\alpha_2)\|'_X=\|\alpha_1x_1+\alpha_2x_2\|_X.$$
If we take, further, $y_1=\tau(x_1), y_2=\tau(x_2)$ and $\B_Y=\{y_1,y_2\}\subset S_Y$, 
then $\B_Y$ is a basis of $Y$ by Tingley's Theorem. Identifying the same 
way $Y$ with $\R^2$ and $\B_Y$ with $\B_2$ via $\phi_Y:Y\to\R^2$ we may consider the 
map $\tau':S_{\nX'}\subset\R^2\to S_{\nY'}\subset\R^2$ defined as 
$\tau'(\lambda_1,\lambda_2)=(\mu_1,\mu_2)$ when  
$\tau(\lambda_1x_1+\lambda_2x_2)=\mu_1y_1+\mu_2y_2, $ 
i.e., $\tau'=\phi_Y\circ\tau\circ\phi^{-1}_X. $
As $\tau'(1,0)=(1,0)$ and $\tau'(0,1)=(0,1)$, the only way $\tau'$ can be linearly extended 
is being the identity so, since $\phi_X$ and $\phi_Y$ are linear, we have two options: 

\begin{itemize}
\item If $\tau'$ is the identity then $\tauSXY$ is the restriction of a linear isometry. 
\item If $\tau'$ is not the identity then there is no linear 
application whose restriction agrees with $\tau$. 
\end{itemize}

So, for $\tauXY$ to be linear it is necessary $S_{\nX'}=S_{\nY'}$. 
We will see that this is also sufficient in two-dimensional spaces, so 
the planar Tingley's problem could be stated as follows: 

\begin{question}\label{q2d}
Suppose $\nX$ and $\nY$ are two norms defined on $\R^2$ such that 
$\|(1,0)\|=\|(0,1)\|=1$ for both norms and there is an isometry $\tauSXY$ such that 
$\tau(1,0)=(1,0), \tau(0,1)=(0,1)$. Does this imply $S_X=S_Y$?
\end{question}

\subsection{Notations}

Throughout this paper, $X$ and $Y$ will be normed spaces. When we deal with 
more than one space, we will write $\nX, \nY$ and so on for the norms unless 
we are referring, on purpose, to equalities or relations that hold for all 
the involved norms, as in Question~\ref{q2d}. 

We will denote by $\B_n=\{e_1,\ldots,e_n\}$ the usual basis of $\R^n$, in particular 
every appearance of $e_i$ will refer to the $i$-th vector of $\B_n$. 

We have avoided the use of open intervals or segments, so that the notation 
$(a,b)$ will always refer to a two-dimensional vector. For closed intervals 
or segments, we will write $[x,x']$, i.e, $[x,x']=\{\lambda x+(1-\lambda x'):
0\leq\lambda\leq 1\}$ no matter whether $x$ and $x'$ are scalars or vectors. 

Given $x,x'\in X$ we will denote as $\Bis(x,x')$ the bisector of the segment 
$[x,x']$, i.e., $\Bis(x,x')=\{z\in X:\|x-z\|_X=\|x'-z\|_X\}$. As we will deal 
frequently with bisectors of symmetric segments of the form $\Bis(x,-x)$, we 
will refer to them as symmetric bisectors and will omit the $-x$ in the notation, 
so $\Bis(x)$ will be the {\em symmetric bisector of} $x$ and must be understood as $\Bis(-x,x)$. 
Please observe that $z\in \Bis(x)$ if and only if $x$ and $z$ are isosceles orthogonal. 

\begin{definition}
Given a segment $[x,x']$ in the sphere of some two-dimensional normed space 
$\XnX$, we say that $[x,x']$ is maximal when it is not strictly contained in 
another segment. 
\end{definition}

\begin{definition}
For $x\in S_X$, the star of $x$ is $\{x'\in S_X:[x,x']\subset S_X\}$. 
\end{definition}

As we will focus primarily on metric concepts, the following subset will 
play the usual role of the star. 
The definition of the star is here just for, say, compatibility purposes. 

\begin{definition}\label{eld2}
Given $x\in S_X$, we will denote by $\el2(x)$ the set $\{x'\in S_X:\|x-x'\|_X=2\}$. 
\end{definition}

\begin{remark}\label{el2star}
By Corollary 5 in~\cite{tingley}, $\el2(x)$ agrees with the star of $-x$, 
so $\el2(x)=\{x'\in S_X:[-x,-x']\subset S_X\}$. 
\end{remark}

\subsection{Plan of the paper}

Apart from this Introduction, the paper is divided into three sections. 

The first one is devoted to some quite elementary, general facts 
about spheres in normed spaces. These facts will be useful in the 
remaining two sections. 

Section 3 is the central one, we have split the proof of the main results into 
several parts. Some of these intermediate results are interesting on their own, 
and also some proofs reveal the main ideas in this paper far better than the 
main results' proofs. 

Finally, in Section 4 we expose some results than can be seen as consequences of 
the ideas more than consequences of the results in Section 3. It includes 
a subsection where we define a kind of generalisation of the usual curvature of 
planar curves. 

\section{The general results}

Given a finite dimensional $\XnX$ and a basis $\B_X=\{x_1,\ldots,x_n\}\subset X$, we 
say that $(\XnX,\B_X)$ is identified with $\RnX$ if 
$$\|(\lambda_1,\ldots,\lambda_n)\|'_X=\|\lambda_1x_1+\cdots\lambda_nx_n\|_X$$ 
for every $(\lambda_1,\ldots,\lambda_n)\in \R^n$, i.e., if the linear application 
$\phi_X:\XnX\to\RnX$ given by $\phi_X(x_i)=e_i, i=1,\ldots,n$ is an isometry. 

\begin{lemma}\label{fundamental}
Let $\XnX, \YnY$ be finite dimensional normed spaces, $\tauSXY$ an 
application (respectively, isometry), and suppose that $\B_X=\{x_1,\ldots,x_n\}\subset S_X$ 
and $\B_Y=\{\tau(x_1),\ldots,\tau(x_n)\}\subset S_Y$ are bases of $X$ and $Y$ 
respectively. Suppose, moreover, that $(\XnX,\B_X)$ and $(\YnY,\B_Y)$ are identified 
with $\RnX$ and $\RnY$ via $\phi_X$ and $\phi_Y$. Then, $\tau$ is the restriction 
of a linear application (resp, isometry) $\tauXY$ if and only if 
$\tau'=\phi_Y\circ\tau\circ\phi_X^{-1}:S_{\nX'}\to S_{\nY'}$ is the identity. 
\end{lemma}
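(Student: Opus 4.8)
The plan is to prove the equivalence by unwinding the definitions and using the fact that $\phi_X,\phi_Y$ are \emph{linear} isometries onto the coordinate spaces $\RnX,\RnY$. Since these two identifications are bijective linear isometries, the map $\tau$ carries all the relevant structure — metric and (potential) linear-extendability — to $\tau'=\phi_Y\circ\tau\circ\phi_X^{-1}$ and back. So the whole statement is really a transport-of-structure observation, and the content is simply: a surjective isometry $S_{\nX'}\to S_{\nY'}$ between coordinate spheres that \emph{additionally} extends to a linear map must be the identity, because it fixes the standard basis $\B_n$.

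First I would record the easy direction. If $\tau'$ is the identity on $S_{\nX'}$, then $\tau=\phi_Y^{-1}\circ\tau'\circ\phi_X=\phi_Y^{-1}\circ\phi_X$ restricted to $S_X$; since $\phi_X:\XnX\to\RnX$ and $\phi_Y^{-1}:\RnY\to\YnY$ are both linear (and isometric, if we are in the isometry case), their composition $\phi_Y^{-1}\circ\phi_X:X\to Y$ is a linear map (isometry), and it restricts to $\tau$ on $S_X$. Hence $\wtau:=\phi_Y^{-1}\circ\phi_X$ witnesses that $\tau$ extends. Note $\RnX$ and $\RnY$ have the same underlying vector space $\R^n$, so this composition makes sense; in the plain "application" version one just drops the word "isometry" everywhere.

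Next, the converse. Suppose $\tau$ is the restriction of a linear map $\wtau:X\to Y$ (an isometry in the isometry case). Then $\tau'=\phi_Y\circ\tau\circ\phi_X^{-1}$ agrees, on $S_{\nX'}$, with the linear map $L:=\phi_Y\circ\wtau\circ\phi_X^{-1}:\R^n\to\R^n$. By hypothesis $\tau(x_i)\in\B_Y$, i.e. $\phi_Y(\tau(x_i))=e_i$, while $\phi_X(x_i)=e_i$, so $L(e_i)=\phi_Y(\wtau(x_i))=\phi_Y(\tau(x_i))=e_i$ for every $i=1,\ldots,n$. A linear map on $\R^n$ fixing the standard basis is the identity, so $L=\Id$, and therefore $\tau'=\Id$ on $S_{\nX'}$. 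This finishes the equivalence. One should also remark, for the "resp.\ isometry" parenthetical, that in that setting $\wtau$ being a linear isometry is automatic once it is linear and extends an isometry of the spheres between finite dimensional spaces — or simply note that the argument above never needed to distinguish the two cases, since it only used linearity of $\phi_X,\phi_Y,\wtau$.

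The only genuinely delicate point — and it is more a matter of bookkeeping than difficulty — is making sure the hypotheses guarantee $\B_Y=\{\tau(x_1),\ldots,\tau(x_n)\}$ is actually a basis so that $\phi_Y$ is well defined; but this is assumed in the statement (and in applications it comes from Tingley's Theorem, which forces $\tau(-x)=-\tau(x)$ and hence that the images of a basis are again linearly independent, as discussed in the Introduction). Given that, there is no real obstacle: the proof is two lines of diagram chasing in each direction, the crux being the triviality that the standard basis of $\R^n$ determines a linear self-map of $\R^n$ uniquely.
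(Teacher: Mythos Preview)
Your proof is correct and follows exactly the same approach as the paper's, which dispatches the lemma in a single sentence: ``Since $\phi_X$ and $\phi_Y$ are onto linear isometries and $\tau'(e_i)=e_i$ for every $i$, the result follows.'' You have simply unpacked that sentence into the two directions and made the diagram chase explicit; the only point you leave implicit in the isometry case is that $\tau'=\Id$ forces $S_{\nX'}=S_{\nY'}$ (hence $\nX'=\nY'$), which is what makes $\phi_Y^{-1}\circ\phi_X$ an isometry rather than merely linear.
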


\begin{proof}
Since $\phi_X$ and $\phi_Y$ are onto linear isometries and $\tau'(e_i)=e_i$ for every $i$, the 
results follows. 
\end{proof}

\begin{definition}
We say that a norm $\nX$ defined on $\R^n$ is normalized if $\|e_i\|_X=1$ for 
every $e_i\in \B_n$. 
\end{definition}

\begin{theorem}\label{twod}
Let $\nX, \nY$ be normalized norms defined on $\R^2$, $\tauSXY$ an 
isometry such that $\tau(1,0)=(1,0)$ and $\tau(0,1)=(0,1)$. Then $\tau$ is the 
restriction of a linear isometry if and only if $S_X=S_Y$.
\end{theorem}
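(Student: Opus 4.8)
The implication ``only if'' is immediate. If $\tau=\wtau|_{S_X}$ for a linear isometry $\wtau:\RdXsp\to\RdYsp$ then, since $\nX$ is normalized, $e_1,e_2\in S_X$ and $\wtau(e_1)=\tau(1,0)=(1,0)$, $\wtau(e_2)=(0,1)$; a linear self-map of $\R^2$ fixing the basis $\{e_1,e_2\}$ is $\Id_{\R^2}$, so $\|v\|_X=\|\wtau(v)\|_Y=\|v\|_Y$ for every $v$, hence $S_X=S_Y$.

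For the converse, assume $S_X=S_Y$. A norm is determined by its unit sphere, so $\nX=\nY=:\norma$; write $S$ for the common sphere. By Lemma~\ref{fundamental} (applied with $\phi_X=\phi_Y=\Id$) it is enough to prove that the surjective isometry $\tau:S\to S$ fixing $e_1$ and $e_2$ is the identity. By Theorem~\ref{Tingley} it fixes $-e_1$ and $-e_2$ as well. Being a surjective isometry of a compact space, $\tau$ is a homeomorphism, and $S$ is a topological circle, so the four fixed points $\pm e_1,\pm e_2$ split $S$ into four arcs. I would first check that $\tau$ carries each of them onto itself: if $Q$ is the arc from $e_1$ to $e_2$ missing $-e_1$ and $-e_2$, then $\tau(Q)$ is an arc from $e_1$ to $e_2$, hence equals $Q$ or the complementary arc, and in the latter case bijectivity forces $\tau$ to carry the complementary arc onto $Q$, so $-e_1=\tau(-e_1)\in Q$ --- absurd. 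Thus $\tau$ restricted to any of the four arcs is a self-homeomorphism fixing both endpoints, hence an \emph{increasing} homeomorphism of that arc; and, by Theorem~\ref{Tingley}, it suffices to prove that $\tau$ is the identity on the two arcs incident with $e_1$, the remaining two being their antipodes.

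Fix such an arc, say $Q$ from $e_1$ to $e_2$, and look at $x\mapsto\|x-e_1\|$ and $x\mapsto\|x-e_2\|$ on $Q$. The geometric input I would use is that, since $S$ bounds a planar convex body, $S\cap(e_1+rB_X)$ is connected for every $r\ge0$; equivalently, $\|\cdot-e_1\|$ is non-decreasing along $Q$ as $x$ runs from $e_1$ to $e_2$, and $\|\cdot-e_2\|$ is non-increasing. Granting this, $\tau$ preserves both functions (it fixes $e_1,e_2$) and $\tau|_Q$ is increasing, so $\tau(x)\neq x$ can occur only when both functions are constant on the sub-arc joining $x$ and $\tau(x)$; it therefore remains to exclude a non-degenerate sub-arc of $Q$ on which both $\|\cdot-e_1\|$ and $\|\cdot-e_2\|$ are constant. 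When $\norma$ is strictly convex there is none: constancy of $\|\cdot-e_1\|$ on a non-degenerate arc $J\subset S$ would make the homothetic strictly convex curves $S$ and $e_1+rS$ share $J$, and comparing their supporting lines along $J$ forces $J$ to be a line segment contained in $S$, impossible under strict convexity; so $\|\cdot-e_1\|$ is injective on $Q$ and $\tau|_Q=\Id$. Without strict convexity such a common flat sub-arc would sit inside a maximal segment of $S$, and one must argue with the supporting line carrying it and with the structure of the sets $\el2(\cdot)$: constancy of $\|\cdot-e_1\|$ makes that line support $e_1+B_X$ too, which fixes the position of $e_1$ relative to the segment, and the analogous constraint imposed by $e_2$ turns out to be incompatible with it.

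I expect this last step --- excluding a common flat sub-arc in the non-strictly-convex case, together with a clean proof of the monotonicity (equivalently, of the connectedness of $S\cap(e_1+rB_X)$) that does not assume smoothness --- to be the only real obstacle; the reduction via Lemma~\ref{fundamental}, Tingley's symmetry, and the purely topological claim that $\tau$ fixes each of the four arcs are routine.
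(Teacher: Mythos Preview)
Your ``only if'' direction is correct. For the ``if'' direction your strategy is sound, but the gap you yourself flag in the non--strictly convex case is real: you are trying to separate points of $Q$ using only the two distances $\|\cdot-e_1\|$ and $\|\cdot-e_2\|$, and on a flat sub-arc of $S$ one of these can genuinely be constant (already in $\ell_\infty^2$), so the supporting-line sketch you give would require further case analysis that you have not carried out.

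The paper sidesteps this completely. Since $\tau$ also fixes $-e_1$ and $-e_2$ (Tingley), all \emph{four} distances $\|z\pm e_1\|$, $\|z\pm e_2\|$ are preserved by $\tau$, and the paper simply invokes the Monotonicity Lemma (\cite{SurveyMSW1}, Proposition~31) in the form: for any basis $\{u_1,u_2\}\subset S$ of a normed plane, the four numbers $\|v\pm u_1\|,\|v\pm u_2\|$ determine $v\in S$. That is the entire argument. Your topological reduction (four arcs, $\tau$ an increasing self-homeomorphism of each) and your monotonicity-along-arcs step are essentially a partial re-derivation of that lemma using only two of the four available distances, which is exactly why the non--strictly convex case resists you; the missing idea is just to bring $\|\cdot+e_1\|$ and $\|\cdot+e_2\|$ into play as well, or equivalently to quote the lemma.
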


\begin{proof}
Let $\tau$ be as in the statement. By Lemma~\ref{fundamental}, if $S_X$ and 
$S_Y$ are different, then $\tau$ is not linear, so the ``only if" part is done. 
What we need to show in order to prove the other implication is that $S_Y=S_X$ 
implies that the only isometry $\tauSXY$ is the identity. 

Take some $z, \tau(z)\in S_X$. As $\tau$ preserves distances and 
$\tau(e_1)=e_1, \tau(e_2)=e_2$, we have $\|z\pm e_1\|=\|\tau(z)\pm e_1\|, $ and 
$ \|z\pm e_2\|=\|\tau(z)\pm e_2\|$. 
The Monotonicity lemma (see, e.g., \cite{SurveyMSW1},~Proposition~31) 
implies that for any normed plane $E$ and any basis 
$\{u_1,u_2\}\subset E$, the four distances $\|v\pm u_1\|_E$, $\|v\pm u_2\|_E$ 
determine $v$ when $v, u_1, u_2\in S_E$, so we have $z=\tau(z)$. 
\end{proof}

\begin{remark}
This result is very similar to~\cite{tanakaR2},~Corollary~2.12.
\end{remark}

\begin{conjecture}\label{conjrn}
Theorem~\ref{twod} is true for any couple of normalized norms defined on $\R^n$. 
\end{conjecture}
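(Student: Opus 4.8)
Towards Conjecture~\ref{conjrn}, here is the line of attack I would pursue. The ``only if'' implication is immediate and dimension-free --- it is Lemma~\ref{fundamental} with $\B_X=\B_n$ --- since a linear isometry $\wtau$ extending $\tau$ must fix the basis $\B_n$, hence $\wtau=\Id_{\R^n}$, hence $\|x\|_X=\|\wtau(x)\|_Y=\|x\|_Y$ for all $x$, i.e.\ $S_X=S_Y$. So the content is the converse, and, writing $\norma$ for the common norm when $S_X=S_Y$, it reduces to the statement $(\star)$: \emph{every surjective isometry $\tau:S\to S$ of the unit sphere $S$ of an $n$-dimensional normed space with $\tau(e_i)=e_i$ for every $i$ is the identity.} By Tingley's Theorem~\ref{Tingley}, such a $\tau$ also fixes each $-e_i$, and it automatically preserves the metric invariants of the paper: $\tau(\el2(x))=\el2(\tau(x))$ and $\tau(\Bis(x))=\Bis(\tau(x))$, so each $\el2(e_i)$ and each symmetric bisector $\Bis(e_i)$ is mapped onto itself.

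The natural first step is to try to copy the proof of Theorem~\ref{twod}, which rests entirely on the Monotonicity lemma: in a normed \emph{plane}, the four distances $\|v\pm u_1\|,\|v\pm u_2\|$ determine $v\in S$ once $\{u_1,u_2\}\subset S$ is a basis. So I would study its $n$-dimensional analogue $(\dagger)$: \emph{if $v,w\in S$ satisfy $\|v\pm e_i\|=\|w\pm e_i\|$ for every $i$, then $v=w$.} Given $(\dagger)$, $(\star)$ follows at once exactly as in Theorem~\ref{twod}: the $2n$ numbers $\|\tau(v)\pm e_i\|=\|v\pm e_i\|$ are unchanged, so $\tau(v)=v$. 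I would test $(\dagger)$ first on $\ell^p$ norms, polyhedral norms, and highly symmetric norms, looking for the extremal pairs $v,w$ that might refute it. \textbf{This is where I expect the real difficulty to lie}: the Monotonicity lemma is a genuinely two-dimensional fact (it uses that a point moving along the sphere changes its distance to a fixed direction monotonically), and I do not know whether $(\dagger)$ holds for $n\geq 3$. If it fails one needs a substitute, for instance a bootstrapped version: use partial determination to pin down a larger ``already determined'' subset $D\subset S$ on which $\tau$ is forced to be the identity, then enlarge the set of probe points from $\{\pm e_i\}$ to $\{\pm e_i\}\cup D$ and iterate, controlling the process through the preserved sets $\el2(\cdot)$ and $\Bis(\cdot)$.

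A second, more structural route --- promising at least when $\norma$ is strictly convex --- is to prove $(\star)$ by first showing that the closed fixed-point set $\{x\in S:\tau(x)=x\}$, which contains $\pm e_1,\ldots,\pm e_n$ and is stable under the $\el2$- and $\Bis$-relations, contains a \emph{relative open} subset of $S$, and then invoking the main result of the paper in its strictly convex form: an onto isometry between the spheres of strictly convex spaces that is the identity on a nonempty relative open set is the identity. A candidate mechanism is induction on the dimension: if one could show that $\tau$ maps each coordinate two-plane section $S\cap\span(e_i,e_j)$ onto itself, then, identifying that plane with $\R^2$ (the induced norm is normalized since $\|e_i\|=\|e_j\|=1$, and $\tau$ fixes the basis), Theorem~\ref{twod} would force $\tau$ to restrict to the identity on every such section, producing a ``skeleton'' of fixed points from which one would try to propagate the identity to a neighbourhood. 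The obstruction is exactly this hypothesis: a surjective isometry of $S$ need not respect linear subspaces, so making $\tau$ preserve the sections $S\cap\span(e_i,e_j)$ seems to require a metric characterisation of those sections in terms of the distances to $e_i$ and $e_j$ --- precisely the plane-specific ingredient one lacks in higher dimension.

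The non-strictly-convex case would additionally need the kind of bookkeeping already present in the two-dimensional results --- reducing locally to the maximal segments and faces of $S$, each of which is lower-dimensional, and checking that the data fixed so far rigidifies each face. In short, modulo Tingley's Theorem the conjecture would drop out immediately from $(\dagger)$, so the effort should go either into proving $(\dagger)$ (or its bootstrapped variant) or, should $(\dagger)$ turn out to be false, into reaching a relative open set of fixed points and then appealing to the strictly convex main theorem.
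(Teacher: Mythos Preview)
The statement is a \emph{conjecture}: the paper does not prove it, and explicitly flags it as open (``Of course this conjecture is a particular case of Tingley's Problem, we explicited it here just because it seems much easier to answer''). So there is no ``paper's own proof'' to compare against; what the paper \emph{does} provide, immediately after stating the conjecture, is a Remark that disposes of your first line of attack.

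Concretely, your candidate principle $(\dagger)$ --- that the $2n$ distances $\|v\pm e_i\|$ determine $v\in S$ --- is \emph{false} for $n\geq 3$, and the paper gives two explicit counterexamples right after Conjecture~\ref{conjrn}. In $(\R^3,\|\cdot\|_\infty)$ there is a basis $\{v_1,v_2,v_3\}\subset S$ and two distinct points $y_1\neq y_2\in S$ with $\|v_i\pm y_1\|=\|v_i\pm y_2\|$ for all $i$; after the linear change of coordinates sending $v_i\mapsto e_i$ this is a normalized norm in which $(\dagger)$ fails. More damagingly for your strictly convex route, the paper exhibits in $(\R^3,\|\cdot\|_3)$ a basis $\{v_1,v_2,v_3\}\subset S\cap\Bis(x)$, so that $x$ and $-x$ have identical distances to every $\pm v_i$: thus $(\dagger)$ fails even under strict convexity. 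So ``test $(\dagger)$ on $\ell^p$ norms'' would have stopped you at $p=3$, and the hope that the Monotonicity lemma lifts to $n\geq 3$ is dead.

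Your second, structural route is much closer to what the paper itself regards as the right direction. Remark~\ref{remrm} says exactly that a global form of Proposition~\ref{normdetermines} --- ``the distances to a relative open subset of $S_X$ determine every point in $X$ whenever $\|\cdot\|_X$ is strictly convex'' --- would prove Conjecture~\ref{conjrn}, and Theorem~\ref{infinited} is the tool that converts ``identity on a relative open set'' into ``identity on the whole sphere'' in the strictly convex case. The genuine gap you correctly identify is the missing step: from the $2n$ fixed points $\pm e_i$ (plus the preserved $\el2$- and $\Bis$-sets) one does not yet know how to manufacture a relative open set of fixed points, nor how to force $\tau$ to preserve the two-plane sections $S\cap\span(e_i,e_j)$. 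The paper leaves this open; your bootstrapping idea is reasonable, but as written it is a programme, not a proof.
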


Of course this conjecture is a particular case of Tingley's Problem, we explicited 
it here just because it seems much easier to answer and it could be helpful. 
See Remark~\ref{remrm} for a little further explanation.

\begin{remark}
The proof of Theorem~\ref{twod} will not adapt to this Conjecture, here we explicit a simple example 
of a three-dimensional space with a basis that does not determine the points in the 
sense used above. Take $(\R^3,\norma_\infty)$ and the basis 
$\{v_1,v_2,v_3\}=\{(1,1,1),(1,1,0.9),(1,0.9,1)\}$. We need to point out that all the 
coordinates in this Remark refer to the usual basis. It is clear that, for 
$y_1=(1,-1,0.1), y_2=(1,-1,-0.1)$, we have 
\begin{itemize}
\item $\|v_i+y_j\|=2$ for every $i, j$. 
\item $\|v_i-y_j\|=2$ if $i, j\in\{1,2\}$.
\item $\|v_3-y_j\|=1.9$ for $j\in\{1,2\}$.
\end{itemize}

On the other hand, it is not hard 
to see that the only isometry of $S^3_\infty$ that preserves $\{v_1,v_2,v_3\}$ 
is the identity. Indeed, suppose $\tau:S^3_\infty\to S^3_\infty$ is an onto 
isometry and $\tau(v_i)=v_i, i=1, 2, 3.$

Then, $\tau$ preserves $\el2(v_1)\cap\el2(v_2)\cap\el2(v_3)=\{-1\}\times[-1,1]\times[-1,1]$, 
so it preserves $\{1\}\times[-1,1]\times[-1,1]$, too. Now it is clear that 
$\tau$ also preserves 
$$(\el2(v_1)\cap\el2(v_2))\setminus(\el2(v_1)\cap\el2(v_2)\cap\el2(v_3))=
\interior([-1,1]\times\{-1\}\times[-1,1])$$
and also $[-1,1]\times[-1,1]\times\{-1\}$, so $\tau(C)=C$ when $C$ is any of the 
six faces of the unit sphere. It is clear that every $(a_1,a_2,a_3)$ belonging 
to the {\em ball} $B^3_\infty$ is determined by its distances to the 
six faces of $S^3_\infty$, so $\tau(a_1,a_2,a_3)=(a_1,a_2,a_3)$, 
for every $(a_1,a_2,a_3)\in S^3_\infty$.
\end{remark}

We can explicit another example, this one involves a strictly convex space. 
Consider $(\R^3,\norma_3)$ and take $x=\frac 1{\sqrt[3]{3}}(1,1,1)\in S_3^3$. Then, 
$$v_1=\frac 1{\sqrt[3]{6}}(1,1,-\sqrt[3]{4}), v_2=\frac 1{\sqrt[3]{6}}(1,-\sqrt[3]{4},1), 
v_3=\frac 1{\sqrt[3]{6}}(-\sqrt[3]{4},1,1)$$
form a basis such that $\|v_i-x\|_3=\|v_j+x\|_3$ for $i, j\in\{1,2,3\}$, so this 
basis does not distinguish $x$ and $-x$. Indeed it is easy to check that all these 
quantities equal $\left(\frac 43+2\sqrt[3]{2}\right)^{1/3}$. What we have done is to 
choose the simplest $x\in S^3_3$ whose symmetric bisector is not planar and the 
simplest basis contained in its symmetric bisector: $v_1, v_2, v_3\in \Bis(x)$. 
Now, the following seems pretty natural:

\begin{question}
Let $\XnX$ be a three-dimensional normed space. Can we choose 
$x, v_1, v_2, v_3\in S_X$ such that $\{v_1, v_2, v_3\}$ 
is a basis and $v_1, v_2, v_3\in \Bis(x)$ whenever $X$ is not Euclidean?
\end{question}

\begin{remark}
It is clear from~\cite{BAustAdrian}, Theorem 3.2, that in every not Euclidean 
three-dimensional space $\XnX$, for each $\lambda\in(0,1)\cup(1,\infty)$ 
there exist $x, v_1, v_2, v_3\in S_X$ such that $v_1, v_2, \lambda v_3\in \Bis(x)$ 
and $\{v_1, v_2, v_3\}$ is a basis. 
\end{remark}

\section{The main results}

We have tried to explicit every useful property, so we have split the proof 
of the main results into several intermediate steps.

\begin{remark}\label{x-x}
Consider an onto isometry $\tauSXY$ between the spheres of a pair of finite dimensional 
normed spaces. By Tingley's Theorem, for every $x\in S_X$ we have $\tau(-x)=-\tau(x)$, so 
\small
$$\|\tau(x_1)+\tau(x_2)\|_Y\!=\!\|\tau(x_1)\minus (\minus \tau(x_2))\|_Y\!=\!\|\tau(x_1)\minus \tau(\minus x_2)\|_Y\!=\!
\|x_1\minus (\minus x_2)\|_X\!=\!\|x_1+x_2\|_X,$$ 
\normalsize
and this readily implies that $\tau(x_1)$ and $\tau(x_2)$ belong to the 
same segment if and only if $x_1$ and $x_2$ do.
\end{remark}

\begin{definition}
For $x\in S_X$, we say that $x$ is flat if there is an affine hyperplane 
$H\subset X$ such that $H\cap S_X$ is a relative neighbourhood of $x$. 
\end{definition}

\begin{obs}\label{obsflat}
Let $x\in S_X$. Then, $x$ is flat if and only if $\el2(x)=\el2(x')$ for every 
$x'\in S_X$ in a relative neighbourhood of $x$. As a consequence, being flat 
is an intrinsic metric property for points in $S_X$. 
\end{obs}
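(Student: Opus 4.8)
The goal is to characterise flatness of $x\in S_X$ purely in terms of the sets $\el2(\cdot)$, which by Remark~\ref{el2star} are intrinsic metric objects (they are exactly the points at distance $2$). I would prove the two implications separately.

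For the forward direction, suppose $x$ is flat, so there is an affine hyperplane $H$ with $H\cap S_X$ a relative neighbourhood $U$ of $x$. Since $S_X$ lies in the closed half-space bounded by $H$ on the side containing $0$, and the points of $U$ already lie on $H$, each $x'\in U$ is a point of $S_X$ lying on a supporting hyperplane, i.e. $H$ is a supporting hyperplane at every $x'\in U$. The key observation is that $\el2(x')$, being the star of $-x'$, is the maximal face of $S_X$ through $-x'$, and by Remark~\ref{el2star} it equals $-(F)$ where $F$ is the face of $S_X$ exposed (or at least supported) by $H$. But $H$ supports $S_X$ along all of $U$, so this face is the same for every $x'\in U$: more precisely, $[-x,-x']\subset S_X$ because the segment $[x,x']\subset U\subset H\cap S_X$ lies on the sphere, and symmetrically any $x''\in\el2(x)$ satisfies $[-x,-x'']\subset S_X$, which combined with $x'\in H$ forces $[-x',-x'']\subset S_X$ as well by convexity of the face. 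Hence $\el2(x)\subseteq\el2(x')$ and by symmetry $\el2(x)=\el2(x')$ for all $x'\in U$.

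For the converse, suppose $\el2(x')=\el2(x)$ for every $x'$ in some relative neighbourhood $U$ of $x$; write $D:=\el2(x)$. Pick any $w$ in the relative interior of $D$ (nonempty since $D=\el2(x)=$ star of $-x$ is a convex subset of $S_X$ containing $-x$). For each $x'\in U$ we have $w\in\el2(x')$, so $\|x'-w\|=2=\|x'\|+\|w\|$, which means the segment $[x',-w]$... more usefully, $[-x',w]\subset S_X$, so $w$ and all the $x'\in U$ lie on a common face of $S_X$. A face of a convex body that has nonempty relative interior in a supporting hyperplane forces that hyperplane to be affine of dimension $n-1$ once the face itself spans it; but here I only know $U$ and $w$ lie in a common face $G$. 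The cleanest route: the common face $G=-D'$ containing $-x$ and $-U$ is contained in some supporting hyperplane $H_0$ of $S_X$; then $-U\subset H_0\cap S_X$, so $U\subset -H_0\cap S_X=:H$, an affine hyperplane, and $H\cap S_X\supseteq U$ is a relative neighbourhood of $x$. Hence $x$ is flat.

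The main obstacle is making precise the passage ``$\el2$ agrees near $x$ $\Rightarrow$ the points near $x$ share a supporting hyperplane.'' One must rule out the degenerate possibility that $D=\el2(x)$ is too small (e.g. a single point $-x$) to pin down a hyperplane; the fix is to note that if $D$ were a single point then $\el2(x')=\{-x\}$ for all $x'\in U$ forces $-x\in\el2(x')$, i.e. $[-x',-x]\subset S_X$ for all $x'$ in a neighbourhood of $x$, which already produces a flat segment structure around $-x$ and hence a supporting hyperplane there. Conversely in the forward direction, the subtle point is that flatness only gives a supporting hyperplane, not an exposing one, so one should phrase everything in terms of faces and segments inside $S_X$ rather than exposed faces; using Remark~\ref{el2star} to translate $\el2$ into ``$[-x,-x']\subset S_X$'' throughout keeps the argument elementary and avoids any appeal to smoothness or exposedness. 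The final sentence of the Observation is then immediate: $\el2$ is determined by the metric (points at distance $2$), so the characterisation exhibits flatness as a metric invariant.
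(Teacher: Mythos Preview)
Your argument is essentially correct, but the converse direction is considerably more elaborate than what the paper does, and the extra machinery (relative interiors of faces, supporting hyperplanes at $w$, the degenerate case $D=\{-x\}$) is not needed.

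The paper's converse is a two-line computation using the hypothesis twice. From $\el2(x')=\el2(x)$ one gets $\operatorname{star}(x')=\operatorname{star}(x)$ for every $x'\in U$; since $x'\in\operatorname{star}(x')=\operatorname{star}(x)$ this already gives $U\subset\operatorname{star}(x)$. Now for any $x',\bar x\in U$ one has $\bar x\in\operatorname{star}(x)=\operatorname{star}(x')$, hence $[x',\bar x]\subset S_X$. Thus $U$ itself is convex, and a convex relatively open subset of $S_X$ lies in an affine hyperplane $H$, so $H\cap S_X\supseteq U$ is the required flat neighbourhood. No choice of $w$, no face/exposedness discussion, and the degenerate case evaporates (if it occurred, $U=\{x\}$, contradicting openness).

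Your route---pick $w\in\operatorname{relint}D$, observe $-U\subset\operatorname{star}(w)=D$, then pass to a supporting hyperplane at $w$---does work, but it leans on several facts you invoke without proof (convexity of the star, $\operatorname{star}(w)=D$ for $w$ in the relative interior, and that a supporting functional at such $w$ is constant on the face). None of these is hard, but each adds friction; the paper sidesteps all of them by noticing that the hypothesis already forces \emph{$U$} to be convex. In the forward direction your face argument and the paper's ``$\el2(x)=-H\cap S_X$ for every $x$ in the interior of the flat piece'' are really the same observation phrased differently.

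One small slip to clean up: you write ``$w$ and all the $x'\in U$ lie on a common face,'' but it is $-U$, not $U$, that lies with $w$ in $\operatorname{star}(w)$; you correct this in the next sentence, but the exposition would benefit from keeping the signs consistent throughout.
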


\begin{proof}
By Remark~\ref{el2star}, if $H\cap S_X$ contains a (nonempty) relative open subset, 
say $U\cap S_X=\interior_{S_X}(H\cap S_X)$, then $\el2(x)=-H\cap S_X,$ for every 
$x\in U$. In particular, $\el2(x)=\el2(x')$ for $x'\in U$. 

On the other hand, let $x\in S_X$ and suppose $\el2(x)=\el2(x')$ for every 
$x'\in U$, where $U=S_X\cap (x+\e B_X)$. 
By Remark~\ref{el2star}, $\el2(x)=\el2(x')$ if and only if 
$$\{\cl{x}\in S_X:[x, \cl{x}]\subset S_X\}=\{\cl{x}\in S_X:[x', \cl{x}]\subset S_X\}.$$
Then, $[x', \cl{x}]\subset S_X$ for every 
$x', \cl{x}\in U$. This implies that $U$ is convex, so it is contained in 
$S_X\cap H$ for some hyperplane $H$. This means that $S_X\cap H$ is also a 
relative neighbourhood of $x$. 
\end{proof}

The following result has been recently published as Theorem 2.6 in \cite{wanghuang}. 
We include it here because we think that our proof is interesting enough. 

\begin{prop}[Wang, Huang,~{\cite[Theorem 2.6]{wanghuang}}]\label{segmento}
Let $\XnX$ be a two-dimensional normed space whose unit sphere contains a segment 
with length at least 1. Then, $\XnX$ has the \MUP. 
\end{prop}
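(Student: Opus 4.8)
The plan is to reduce to Theorem~\ref{twod}, so the whole game is to show that \emph{every} surjective isometry $\tauSXY$ from $S_X$ to the sphere of another normed space $Y$ is the restriction of a linear isometry; equivalently, after identifying both spaces with $\R^2$ via well-chosen bases as in Lemma~\ref{fundamental}, that the induced map $\tau'$ on $S_{\nX'}$ into $S_{\nY'}$ forces $S_X = S_Y$ and then is the identity. First I would fix a maximal segment $[a,b]\subset S_X$ with $\|a-b\|_X \ge 1$, and normalise so that its endpoints (or convenient interior points) sit in a useful position. The key structural observation is that the property ``$x$ lies in a segment of the sphere of length $\ge 1$'' is metrically detectable: if $[a,b]\subset S_X$ has length $\ell=\|a-b\|_X$, then for $x$ in the relative interior of that segment one has very rigid distance data to the endpoints and to $-a,-b$, and in particular the set $\el2(x)$ (Definition~\ref{eld2}) behaves like the ``opposite'' segment via Remark~\ref{el2star}. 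Since $\tau$ preserves all these distances and, by Tingley's Theorem together with Remark~\ref{x-x}, sends segments to segments of the same length, $\tau$ must carry the maximal segment $[a,b]$ onto a maximal segment $[\tau(a),\tau(b)]\subset S_Y$ of the same length $\ell \ge 1$.

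Next I would exploit that a segment of length $\ge 1$ on the sphere is ``long enough'' to pin down the norm. Concretely, take $a,b$ the endpoints of the maximal segment and pick two points $u,v$ in its relative interior with $\|u-v\|_X$ as large as allowed; then for a generic $z\in S_X$, the four numbers $\|z\pm u\|_X$, $\|z\pm v\|_X$ already determine $z$ by the Monotonicity Lemma (the same tool used in the proof of Theorem~\ref{twod}), provided $\{u,v\}$ is a basis of $X$ — which it is, since $a,b$ are not parallel (a segment on the unit sphere cannot pass through $0$). Because $\tau$ preserves these four distances and fixes the images $\tau(u),\tau(v)$, the space $Y$, when identified via the basis $\{\tau(u),\tau(v)\}$, must have the same unit sphere as $X$ identified via $\{u,v\}$: indeed on the relevant open arc the spheres coincide by construction, and then the Monotonicity Lemma propagates this identification to all of $S_X$. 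Once $S_X = S_Y$ in these coordinates, Theorem~\ref{twod} applies verbatim and gives that $\tau$ is the restriction of a linear isometry; since $Y$ was arbitrary, $X$ has the \MUP.

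The step I expect to be the main obstacle is making precise the claim that a segment of length exactly (or at least) $1$ is ``metrically privileged'' in a way that is both \emph{intrinsic} (so that $\tau$ must respect it) and \emph{rich enough} to recover a basis. The length bound $\ell \ge 1$ is presumably exactly what is needed so that inside the maximal segment one can find two points $u,v$ that (i) form a basis and (ii) have the property that the pair of symmetric-bisector conditions $\|z\pm u\|, \|z\pm v\|$ is determining — if the segment were too short this could fail, e.g.\ $u$ and $v$ could be forced too close to collinear with the direction recovered from distances, or the arc covered could be too small to feed the Monotonicity Lemma. I would handle this by an explicit choice: parametrise the maximal segment, observe that $\ell\ge 1$ lets $u,v$ range over an arc whose chord has length $\ge$ something comparable to $1$, and check the basis/monotonicity hypotheses directly. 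The rest — invoking Remark~\ref{x-x}, Observation~\ref{obsflat}, Remark~\ref{el2star}, and finally Theorem~\ref{twod} — is then bookkeeping. A secondary subtlety is confirming that $Y$ is genuinely two-dimensional (so that Theorem~\ref{twod} is applicable); this follows from Tingley's Theorem as in the Introduction's discussion, since $\tau(u),\tau(v)$ must be a basis of $Y$.
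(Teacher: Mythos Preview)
Your plan has a genuine gap at the crucial step. You want to reduce to Theorem~\ref{twod}, but that theorem presupposes $S_X=S_Y$, and your argument for this equality is circular. The Monotonicity Lemma says that, \emph{within a fixed normed plane} $E$, the four numbers $\|z\pm u\|_E,\|z\pm v\|_E$ determine $z\in S_E$. It does not compare two different norms: knowing $\|z-u\|_X=\|\tau(z)-u\|_Y$ (after identifying bases) tells you nothing about the coordinates of $\tau(z)$ unless you already know $\nX=\nY$ off the segment. So ``the Monotonicity Lemma propagates this identification to all of $S_X$'' is not a valid step. Relatedly, your diagnosis of where the hypothesis $\ell\ge 1$ enters is off: any two linearly independent points $u,v\in S_X$ already form a basis to which the Monotonicity Lemma applies, so the length bound is not what makes $\{u,v\}$ ``determining''.

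The paper does not go through Theorem~\ref{twod} at all. It chooses the basis $u_1=(x_1-x_2)/\|x_1-x_2\|$, $u_2=(x_1+x_2)/2$, so that in coordinates the maximal segment becomes $[-\lambda,\lambda]\times\{1\}$ with $\lambda=\tfrac12\|x_1-x_2\|\ge\tfrac12$, and the analogous basis on $Y$. Mankiewicz plus Tingley fix $\tau$ on the two horizontal segments. The point is then purely geometric: because the sphere contains $[-\lambda,\lambda]\times\{1\}$, for any two points $(a,b),(a',b')$ with $|a-a'|\le\lambda|b-b'|$ the distance is simply $|b-b'|$, in \emph{both} norms. The bound $\lambda\ge\tfrac12$ is exactly what makes this ``vertical-distance cone'' wide enough that every remaining point $(\alpha,\beta)$ of either sphere (which must lie in $\conv\{(1/2,1),(3/2,1),(1,0)\}$ up to symmetry) satisfies the cone condition with the fixed point $(1/2,-1)$; hence $\beta$ is read off as $\|(\alpha,\beta)-(1/2,-1)\|-1$, a quantity preserved by $\tau$. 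A second, similar computation recovers $\alpha$. Thus $\tau(\alpha,\beta)=(\alpha,\beta)$ directly, without ever assuming $S_X=S_Y$ in advance. Your proposal is missing this cone argument (or any substitute for it), and that is the real content of the proof.
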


\begin{proof}
Suppose that $\XnX$ is such a space and suppose there are some $\YnY$ and $\tauSXY$ 
such that $\tau$ is an onto isometry. Let $[x_1,x_2]$ be a maximal segment in $S_X$ 
such that $\|x_1-x_2\|_X\geq 1$.
Since $x_1,x_2$ are not flat points, Observation~\ref{obsflat} implies that 
neither $y_1=\tau(x_1),$ $ y_2=\tau(x_2)$ are, but Remark~\ref{x-x} implies that 
$[y_1,y_2]\subset S_Y$ and we deduce that $[y_1,y_2]$ is a maximal segment in $S_Y$. 
As $\|y_1-y_2\|_Y=\|x_1-x_2\|_X\geq 1$, the sphere $S_Y$ contains another segment 
with length at least 1. 

Take $\B_X=\{u_1,u_2\}$ as a basis of $X$, where $u_1$ and $u_2$ are 
$u_1=\frac 1{\|x_1-x_2\|_X}(x_1-x_2)$ and $u_2=\frac 1{2}(x_1+x_2)$. 
Consider $Y$ endowed with the analogous basis, 
$\B_Y=\{v_1,v_2\}$ given by $v_1=\frac 1{\|y_1-y_2\|_Y}(y_1-y_2)$ and 
$v_2=\frac 1{2}(y_1+y_2)$. We will make heavy use of 
coordinates, so please recall that they will refer to these bases 
for the remainder of the proof.  

Denoting $\lambda=\frac 12\|x_1-x_2\|_X=\frac 12\|y_1-y_2\|_Y\geq \frac 12$, 
we have $x_1=(\lambda,1), x_2=(-\lambda,1)\in S_X$ and $y_1=(\lambda,1), 
y_2=(-\lambda,1)\in S_Y$. Both spheres include the segment 
$[-\lambda,\lambda]\times\{1\}$ and so, its opposite 
$[-\lambda,\lambda]\times\{-1\}$. In both spaces we have $\|(1,0)\|=1$, so we 
actually have $([-\lambda,\lambda]\times\{-1,1\})\cup\{(\pm 1,0)\}\subset S$. 
As $\tau(\lambda,1)=(\lambda,1)$ and $\tau(-\lambda,1)=(-\lambda,1)$, 
Mankiewicz Theorem implies that $\tau(\alpha,1)=(\alpha,1)$ when 
$\alpha\in[-\lambda,\lambda]$ and, by Tingley's Theorem, we also have 
$\tau(-\alpha,-1)=(-\alpha,-1)$ for every $\alpha\in[-\lambda,\lambda]$. 

Summing up all these data, the convexity of the unit ball of any norm implies 
that every remaining point of each sphere lies inside some of the following 
four triangles. If $(\alpha, \beta)$ belongs to any of the spheres --and not 
to their above described subsets-- then:

\begin{itemize}
\item $\alpha, \beta\geq 0$ implies $(\alpha, \beta)\in \conv\{(\lambda,1),(2-\lambda,1),(1,0)\}$. 
\item $\alpha\geq 0, \beta\leq 0$ implies $(\alpha, \beta)\in \conv\{(\lambda,-1),(2-\lambda,-1),(1,0)\}$. 
\item $\alpha\leq 0, \beta\geq 0$ implies $(\alpha, \beta)\in \conv\{(-\lambda,1),(\lambda-2,1),(-1,0)\}$. 
\item $\alpha, \beta\leq 0$ implies $(\alpha, \beta)\in \conv\{(-\lambda,-1),(\lambda-2,-1),(-1,0)\}$. 
\end{itemize}

We may suppose $\alpha, \beta\geq 0$, being the other cases symmetric. 
As $\lambda\in[1/2,1]$, we have $(\alpha, \beta)\in \conv\{(1/2,1),(3/2,1),(1,0)\}$. 
We shall see that there are two more metric-depending parameters that determine 
both $\alpha$ and $\beta$ --and determined by $\alpha$ and $\beta$. 
Indeed, as the upmost part of both spheres consist of the segment 
$[(-\lambda,1),(\lambda,1)]$, for each point $(a,b)\in\R^2$ there is a cone 
where the distances to $(a,b)$ are just the differences between their second coordinates. 
Namely, if $(a',b')$ is such that $|(a-a')|\leq\lambda|(b-b')|$, then 
$\|(a,b)-(a',b')\|_X=\|(a,b)-(a',b')\|_Y=|b-b'|$. So, 
the distance from a given $(\alpha, \beta)\in \conv\{(1/2,1),(3/2,1),(1,0)\}$ 
to $(1/2,-1)$ is precisely $1+\beta$. 

This implies that {\em the} point $(\alpha, \beta)\in S$ at distance $1\leq d<2$ 
from $(1/2,-1)$ and distance smaller than 1 from $(1/2,1)$ is $(\alpha, d-1)$ for some 
$\alpha\geq\lambda$, so the second coordinate of $\tau(\alpha, \beta)$ is 
$\beta$. 

We may determine $\alpha$ by means of the metric, too. Indeed, fix 
$\beta\in[0,1]$ and take $1-\beta/2\leq\alpha\leq 1+\beta/2$, so that 
$(\alpha, \beta)\in \conv\{(1/2,1),(3/2,1),(1,0)\}$. Let $(\delta,-1)$ 
be the leftmost point in the intersection of $\R\times\{-1\}$ and 
$(\alpha, \beta)+(1+\beta)S$, it is straightforward that it is also the 
leftmost point in the intersection of $\R\times\{-1\}$ and the cone 
$\{(a,b)\in \R^2:|a-\alpha|\leq\lambda|b-\beta|\}$. 
It is clear that the inequality that defines the cone is an equality for 
$(\delta,-1)$, so $\delta$ fulfils $|(\delta-\alpha)/(-1-\beta)|=\lambda$ 
and it is obvious that $\delta<\alpha$, so 
$$(\delta-\alpha)/(-1-\beta)=\lambda.$$
As $(\delta,-1)$ is fixed, this means that $\nX$ determines $\alpha$. 
Namely, $\alpha=\delta+(1+\beta)\lambda$ where $\beta,\delta$ and $\lambda$ 
just depend on distances that agree for $\nX$ and $\nY$. This means that 
the only possibility is that $\tau(\alpha, \beta)$ is again $(\alpha, \beta)$.
\end{proof}

\begin{coro}\label{polygonal}
$\R^2$, endowed with any polygonal norm, has the \MUP. 
\end{coro}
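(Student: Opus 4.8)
\emph{Proof proposal.} The plan is to reduce Corollary~\ref{polygonal} to Proposition~\ref{segmento} whenever the polygon has a long edge, and to handle the opposite case --- all edges shorter than $1$ --- by a rigidity argument propagating around the boundary. Recall that a polygonal norm on $\R^2$ has unit sphere $S_X$ equal to a finite union of maximal segments (its edges), which come in opposite pairs since $S_X$ is centrally symmetric and whose endpoints are exactly the vertices; by the definition of flatness these vertices are precisely the non-flat points of $S_X$. Let $\tauSXY$ be an onto isometry for some $\YnY$; by Tingley's Theorem $\tau(-x)=-\tau(x)$. If some edge of $S_X$ has length $\geq 1$, Proposition~\ref{segmento} already gives that $\XnX$ has the \MUP, so assume every edge is shorter than $1$. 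Since flatness is a metric property (Observation~\ref{obsflat}), lying on a common segment is a metric property (Remark~\ref{x-x}), and $\tau$ is a homeomorphism, $\tau$ carries the open edges of $S_X$ onto the relatively open flat pieces of $S_Y$ and the vertices onto the non-flat points of $S_Y$; hence $S_Y$ is again a polygon with the same number of edges, $\tau$ matches edges in cyclic order and preserves their lengths, and it respects the ``two edges at a vertex'' incidence. (One uses here that the $\tau$-image of a segment lying in a sphere is again a genuine straight segment: its endpoints already span a segment inside $S_Y$ by Remark~\ref{x-x}, and an isometric image of an interval whose endpoints sit at the corresponding distance is forced onto that segment.) Finally, fix an edge $F=[x_1,x_2]$ of $S_X$, put $\lambda=\tfrac12\|x_1-x_2\|_X<\tfrac12$, let $y_i=\tau(x_i)$, and choose the adapted bases of $X$ and $Y$ exactly as in the proof of Proposition~\ref{segmento}; in the resulting coordinates both $S_X$ and $S_Y$ contain $[-\lambda,\lambda]\times\{-1,1\}$ and $(\pm1,0)$, and, as in that proof (Mankiewicz's and Tingley's Theorems), $\tau$ fixes $[-\lambda,\lambda]\times\{-1,1\}$ pointwise.

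The heart of the argument is then a propagation around the polygon. Let $A\subseteq S_X$ be a symmetric sub-arc ($A=-A$) on which $\tau$ is the identity; then $A=\tau(A)\subseteq S_X\cap S_Y$, i.e.\ $\nX$ and $\nY$ agree along $A$. Starting from $A_0=[-\lambda,\lambda]\times\{-1,1\}$, I would show that $A$ can always be enlarged by one more edge at one of its endpoints: if $p_k$ is such an endpoint (a vertex) and $p$ is the next vertex of $S_X$, then $\tau(p)$ must lie on the edge of $S_Y$ issuing from $p_k=\tau(p_k)$ other than the already-pinned one, at $\nY$-distance $\|p_k-p\|_X$ from $p_k$, and $\tau(p)$ has the same distances to every point of $A$ as $p$ has. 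Since $A$ contains $F$, $-F$ and, after a few steps, further opposite pairs of edges, these distance constraints --- together with the convexity of $B_Y$ and the edge-length/incidence constraint --- force $\tau(p)=p$. Repeating, the arc $A$ eventually exhausts $S_X$, so $S_X=S_Y$ in these coordinates; then, as at the end of the proof of Proposition~\ref{segmento} (equivalently, by the ``if'' part of Theorem~\ref{twod}, whose hypotheses now hold), $\tau$ is the identity, hence the restriction of a linear isometry. Thus $\XnX$ has the \MUP, which proves Corollary~\ref{polygonal}; this also fits as a special case of the general two-dimensional, non strictly convex statement.

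The main obstacle is precisely the inductive step: showing that the metric data preserved by $\tau$ along the \emph{growing} arc $A$ --- distances to $A$, the edge length, and the incidence of $\tau(p)$ with the correct edge of $S_Y$ at the previously pinned vertex --- pin $\tau(p)$ down \emph{uniquely}, without prior knowledge of $S_Y$. Proposition~\ref{segmento} secured such uniqueness through a ``cone'' argument that genuinely requires an edge of length $\geq 1$; for a general polygon one must instead exploit that, as the propagation advances, the already-pinned arc $A$ (along which $\nX$ and $\nY$ coincide) furnishes near each still-unpinned vertex a piece of sphere rigid enough to rerun that reasoning at a smaller scale, and one must check that the propagation closes up consistently all the way around $S_X$. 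Isolating and proving a clean, uniform ``the next vertex is metrically determined'' lemma is where the real work lies.
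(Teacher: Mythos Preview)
Your overall plan---fix $\tau$ on a starting edge via Mankiewicz, then propagate around the polygon---is exactly the paper's strategy. The difference is that you have misidentified the obstruction and thereby turned an easy step into an acknowledged gap.

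You write that the cone argument of Proposition~\ref{segmento} ``genuinely requires an edge of length $\geq 1$''. That is true only if you insist on pinning \emph{every} remaining point of the sphere from a single edge. The paper does not do that: it only uses the cone argument \emph{locally}, to pin points $(\alpha,\beta)\in S_X$ with $\alpha\in[\lambda,2\lambda]$ and $\beta>0$, i.e.\ points just past the vertex $(\lambda,1)$. For such $(\alpha,\beta)$, the reference point $(0,-1)$ (in place of the $(1/2,-1)$ used in Proposition~\ref{segmento}) lies in the fixed segment $[-\lambda,\lambda]\times\{-1\}$ regardless of the value of $\lambda$, and $(\alpha,\beta)-(0,-1)$ lies in the cone $\{|a|\le\lambda|b|\}$, so $\|(\alpha,\beta)-(0,-1)\|=1+\beta$ in both norms and $\beta$ is determined. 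Likewise the leftmost point of $\big((\alpha,\beta)+(1+\beta)S\big)\cap(\R\times\{-1\})$ is $(\alpha-\lambda(1+\beta),-1)$, whose first coordinate lies in $[-\lambda,\lambda]$ for these $(\alpha,\beta)$, so $\alpha$ is determined as well. No hypothesis on the edge length is used here; your case split ``some edge $\ge 1$ vs.\ all edges $<1$'' is therefore unnecessary.

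Once a single point of the next edge $H_1$ (other than the shared vertex) is fixed, Mankiewicz gives $\tau=\Id$ on all of $H_1$. The paper then simply changes bases to the pair adapted to $H_1$ and repeats; since the change of basis is the same linear map in $X$ and in $Y$, $\tau$ remains the identity in the new coordinates on $H_0\cup H_1$, and the induction closes up around the polygon by compactness. This is precisely the ``clean, uniform `the next vertex is metrically determined' lemma'' you were looking for---it is already contained in the proof of Proposition~\ref{segmento}, once you notice that only the local version near a vertex is needed.
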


\begin{proof}
This is just the simplest case of the main result in~\cite{KadetsMiguel}, but 
here we explicit a proof based on the proof of Proposition~\ref{segmento}. 

Let $\XnX$ be such that $S_X$ is a polygon and take some segment 
$H_0=[x_0,x_1]\subset S_X$ and $\lambda=\frac 12\|x_1-x_0\|_X$. Take another 
segment $H_1=[x_1,x_2]\subset S_X$, adjacent to $H_0$, and suppose that there 
exist $\YnY$ and $\tauSXY$ such that $\tau$ is an onto isometry. 
Consider on $X$ the basis 
$$\B_X=\{(x_1-x_0)/\|x_1-x_0\|_X, (x_1+x_0)/2\}.$$
As before, we have $\|(1,0)\|_X=1$, $x_1=(\lambda,1)$ and $x_0=(-\lambda,1)$. 
If we take the basis 
$$\B_Y=\{(\tau(x_1)-\tau(x_0))/\|\tau(x_1)-\tau(x_0)\|_Y, (\tau(x_1)+\tau(x_0))/2\},$$ 
then $\|(1,0)\|_Y=1$ and we may apply verbatim the argument in the previous 
proof to obtain 
$$\tau(\alpha,1)=(\alpha,1), \forall\ \alpha\in[-\lambda,\lambda]$$ 
and also $\tau(\alpha,\beta)=(\alpha,\beta)$ whenever $\alpha\in[\lambda,2\lambda]$ 
and $\beta>0$. This means that there is some $(\alpha,\beta)\in H_1$, $(\alpha,\beta)
\neq(\lambda,1)$ such that $\tau(\alpha,\beta)=(\alpha,\beta)$. By Mankiewicz Theorem, 
this implies that every point in $H_1$ is fixed. 

Of course, if we now {\em rotate} both $S_X$ and $S_Y$ by taking as bases 
$$\B'_X=\{(x_2-x_1)/\|x_2-x_1\|_X, (x_2+x_1)/2\} \ \mathrm{and}$$
$$\B'_Y=\{(\tau(x_2)-\tau(x_1))/\|\tau(x_2)-\tau(x_1)\|_Y, (\tau(x_2)+\tau(x_1))/2\}$$
then both rotations have the same expression in coordinates, so $\tau$ is still the 
identity on $H_0\cup H_1$. Applying the same reasoning to $H_2=[x_2,x_3]\subset S_X$ 
and so on, we obtain that $\tau$ is the identity on $S_X$. 
\end{proof}

\begin{definition}
When in two-dimensional spaces, and given a couple of linearly independent 
$x, x'\in S_X$, the {\em arc} that connects $x$ and $x'$ is defined as 
$$A(x,x')=\{\lambda x+\lambda'x':\lambda, \lambda'\geq 0\}\cap S_X$$
and it is the smallest 
connected subset of $S_X$ that contains both $x$ and $x'$. 
\end{definition}

\begin{theorem}\label{abiertotodo2d}
Let $\XnX, \YnY$ be two-dimensional normed spaces for which there exists an 
onto isometry $\tauSXY$. If there is some relative open $U\subset S_X$ where $\tau$ 
is linear, then $\tau$ is linear on $S_X$. 
\end{theorem}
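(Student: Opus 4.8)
The plan is to propagate the linearity of $\tau$ from the given relative open set $U$ across the whole sphere, using the same coordinate trick as in Lemma~\ref{fundamental} together with the metric rigidity arguments already developed. First I would fix two linearly independent points $x_1, x_2\in U$ lying close enough together that the arc $A(x_1,x_2)$ is contained in $U$, and set $y_i=\tau(x_i)$; by Tingley's Theorem $\{y_1,y_2\}$ is a basis of $Y$. Identify $(X,\B_X)$ with $\RnX$ and $(Y,\B_Y)$ with $\RnY$ as in Lemma~\ref{fundamental}, where $\B_X=\{x_1,x_2\}$ and $\B_Y=\{y_1,y_2\}$, and pass to $\tau'=\phi_Y\circ\tau\circ\phi_X^{-1}$. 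Since $\tau$ is linear on $U$ and $\phi_X,\phi_Y$ are linear, $\tau'$ is the identity on $\phi_X(U)$, a relative open subset of $S_{\nX'}$ containing $e_1$ and $e_2$; in particular $S_{\nX'}$ and $S_{\nY'}$ coincide on this open arc.

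The core step is then to show that agreement of the two spheres on a relative open arc, together with the existence of the isometry $\tau'$ fixing that arc, forces the two normalized norms to be equal and $\tau'$ to be the identity everywhere. Here I would argue as in the proof of Proposition~\ref{segmento} and Corollary~\ref{polygonal}: once $\tau'$ is the identity on an open arc $A$, pick any point $z$ on $S_{\nX'}$ just outside the closure of $A$ and use the Monotonicity lemma together with the fact that $\tau'$ preserves distances to a whole continuum of already-fixed points of $A$ (and their antipodes, via Tingley) to pin down $\tau'(z)$. Because $z$ can be taken arbitrarily close to the endpoint of $A$, this shows the fixed set of $\tau'$ is open; since it is obviously closed and $S_{\nX'}$ is connected, $\tau'$ is the identity on all of $S_{\nX'}$, hence $S_{\nX'}=S_{\nY'}$ and, by Theorem~\ref{twod} (or directly by Lemma~\ref{fundamental}), $\tau$ is the restriction of a linear isometry on $S_X$.

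One technical point that needs care is that the "metric parameters" used in Proposition~\ref{segmento} to locate a point from an already-fixed flat arc exploited that the fixed arc was an honest segment of the sphere. In the present generality the fixed open arc $A$ need not be a segment, so the cone where distances reduce to a single coordinate difference may degenerate to a ray. I expect this to be the main obstacle. The way around it is to note that the fixed set of $\tau'$ is relatively open and that $\tau'$ preserves $\el2$-sets (Definition~\ref{eld2}) and the flat/non-flat dichotomy (Observation~\ref{obsflat}); so it suffices to show that if $\tau'$ is the identity on a relative neighbourhood of some $z_0\in S_{\nX'}$ then it is the identity on a strictly larger neighbourhood. For this I would take a point $z$ just past $z_0$, join it to two fixed points $p,q$ on opposite sides of $z_0$ inside the fixed neighbourhood, and observe that $\|z-p\|$, $\|z+p\|$, $\|z-q\|$, $\|z+q\|$ are preserved; applying the Monotonicity lemma with the basis $\{p,q\}$ (after rescaling, exactly the device used in Theorem~\ref{twod}) determines $z$ uniquely, giving $\tau'(z)=z$. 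Running this over all such $z$ enlarges the fixed neighbourhood, and the connectedness/clopen argument finishes the proof.
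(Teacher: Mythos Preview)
Your overall strategy---show that the fixed-point set of $\tau'$ is clopen in $S_{\nX'}$---is the right one, and it is exactly what the paper does. But the step you propose for enlarging the fixed set has a genuine gap. You want to take $z$ just past the endpoint of the fixed arc, note that $\|z\pm p\|$ and $\|z\pm q\|$ are preserved for fixed $p,q$, and invoke the Monotonicity Lemma as in Theorem~\ref{twod}. The trouble is that in Theorem~\ref{twod} the hypothesis is $S_X=S_Y$, so $z$ and $\tau(z)$ lie on the \emph{same} sphere and the four distances are measured in the \emph{same} norm. Here $z\in S_{\nX'}$ and $\tau'(z)\in S_{\nY'}$, and ``preserved'' only means $\|z-p\|'_X=\|\tau'(z)-p\|'_Y$; you have no a~priori reason why $\|z-p\|'_X$ equals $\|z-p\|'_Y$, so Monotonicity in either norm tells you nothing about the other point. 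The device of Theorem~\ref{twod} simply does not transfer.

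What is missing is a bridge between the two norms, and this is exactly the content of the paper's argument. The paper chooses the basis $\{(x_2-x_1)/\|x_2-x_1\|,\;(x_1+x_2)/2\}$ rather than $\{x_1,x_2\}$: in these coordinates the fixed arc $H$ runs from $(-\lambda,1)$ to $(\lambda,1)$, so $(1,0)$ is the chord direction of $H$ and $(0,\mu)$ lies in $H$. Since $\tau'$ is the identity on $H$ and (by Tingley) on $-H$, the distances between points of $H$ determine the shape of both spheres near $(1,0)$, and the distances between $H$ and $-H$ do the same near the vertical direction; hence $\nX'=\nY'$ on two cones of directions, one about $(1,0)$ and one about $(0,1)$. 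Now for $z$ just past $(\lambda,1)$ the vectors $z-(-\lambda,1)$ and $z-(\lambda,-1)$ fall into these cones, so their $X$- and $Y$-lengths agree, and only then can one locate $\tau'(z)$ by intersecting two $Y$-spheres and ruling out the spurious intersection via Monotonicity. Your proposal skips this bridge entirely, and without it the argument cannot get off the ground.
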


\begin{proof}
Suppose there is an arc $H=A(x_1,x_2)\subset S_X$ such that 
$$\tau(\lambda_1x_1+\lambda_2x_2)=\lambda_1\tau(x_1)+\lambda_2\tau(x_2)$$ 
for every positive $\lambda_1, \lambda_2$ for which $\lambda_1x_1+\lambda_2x_2\in S_X$. 
We will show that $H$ is contained in another arc that fulfils the same condition. 
As $S_X$ is compact, this is enough. 

If $H$ contains some segment $[x',x_1]$, then the argument in the previous proofs 
shows that $\tau$ is linear in a relative neighbourhood of $x_1$, so we may suppose that 
$\el2(x_1)\cap (-H)=\{-x_1\}$. 
Taking, as usual, $\lambda=\frac 12\|x_2-x_1\|_X$, 
$$\B_X=\{(x_2-x_1)/\|x_2-x_1\|_X, (x_2+x_1)/2\} \ \mathrm{and}$$
$$\B_Y=\{(\tau(x_2)-\tau(x_1))/\|\tau(x_2)-\tau(x_1)\|_Y, (\tau(x_2)+\tau(x_1))/2\},$$
we may pass to coordinates to obtain that $\tau$ is the identity on the arc lying between 
$(-\lambda,1)$ and $(\lambda,1)$. Of course, $\tau$ is also the identity on the 
opposite arc and this implies, in particular, that $\tau(\lambda,-1)=(\lambda,-1)$. 

Let $\mu=\frac 1{\|(0,1)\|}$, so that $(0,\mu)\in S_X\cap S_Y$ and $\mu>1$, and 
observe that the points in $H$, along with the distances between them, determine 
the shape of $S_X$ (and $S_Y$) in a relative neighbourhood of $(1,0)$. In particular, 
for $z\in[0,2\lambda]\times[3-2\mu,2\mu-1]$, we have 
$$\|z-(-\lambda,1)\|_X=\|z-(-\lambda,1)\|_Y$$
because $z-(-\lambda,1)$ is close to the horizontal axis. 
In the same way, as $S_X$ and $S_Y$ coincide near $(0,\mu)$, we have 
$$\|z-(\lambda,-1)\|_X=\|z-(\lambda,-1)\|_Y$$
because $z-(\lambda,-1)$ is close to the vertical axis. 

So, if $z=(z_1,z_2)\in \big([0,2\lambda]\times[3-2\mu,2\mu-1]\big)\bigcap \big(S_X\setminus H\big)$, then 
$$a=\|\tau(z)-(\lambda,-1)\|_Y=\|z-(\lambda,-1)\|_X=\|z-(\lambda,-1)\|_Y\ \mathrm{and}$$
$$b=\|\tau(z)-(-\lambda,1)\|_Y=\|z-(-\lambda,1)\|_X=\|z-(-\lambda,1)\|_Y.$$
As $(\lambda,-1)$ does not lie in the interior of a segment included is $S_Y$, 
there are only two points in $((\lambda,-1)+aS_Y)\cap ((-\lambda,1)+bS_Y)$. 
Namely, one of these points is $z$ and the other one, say $z'$, lies at the 
other side of the line $\{t\cdot(\lambda,-1):t\in\R\}$. So, the only possibilities 
are $\tau(z)=z$ or $\tau(z)=z'$. But the Monotonicity Lemma implies that 
$$\|z'-(\lambda,1)\|\geq \min\{\|(\lambda,-1)-(\lambda,1)\|,\|(\lambda,-1)-(\lambda,1)\|\}.$$
So, assuming that 
$$\|z-(\lambda,1)\|_X\leq \min\{\|(\lambda,-1)-(\lambda,1)\|,\|(\lambda,-1)-(\lambda,1)\|\},$$ 
which we clearly can do, 
the previous reasonings lead to $\tau(z)=z$ and we have finished the proof. 
\end{proof}

\begin{coro}\label{nonstrcvx}
Every two-dimensional, non strictly convex, normed space has the \MUP. 
\end{coro}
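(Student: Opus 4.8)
The plan is to derive Corollary~\ref{nonstrcvx} from Theorem~\ref{abiertotodo2d}. Let $\XnX$ be a two-dimensional normed space that is not strictly convex. Then $S_X$ contains a nondegenerate segment, and by compactness of $S_X$ this segment is contained in a maximal one, say $[x_1,x_2]$ with $x_1\neq x_2$; write $\lambda=\frac12\|x_1-x_2\|_X>0$. The point to notice is that the relative interior of $[x_1,x_2]$ is a nonempty \emph{relatively open} subset of $S_X$: around an interior point $p$ the sphere coincides, on a small ball, with the affine line carrying $[x_1,x_2]$. Hence, by Theorem~\ref{abiertotodo2d}, it is enough to show that for every $\YnY$ and every onto isometry $\tauSXY$ (the space $Y$ is forced to be two-dimensional) the map $\tau$ is linear on this open arc; Theorem~\ref{abiertotodo2d} then upgrades this to linearity on all of $S_X$, so that $\tau$ extends to a linear isometry by Lemma~\ref{fundamental}, and since $\YnY$ was arbitrary $\XnX$ has the \MUP.

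So fix such a $\tau$. By Tingley's Theorem together with Remark~\ref{x-x}, $[\tau(x_1),\tau(x_2)]\subset S_Y$; and since endpoints of maximal segments are not flat, Observation~\ref{obsflat} shows $\tau(x_1),\tau(x_2)$ are not flat, so $[\tau(x_1),\tau(x_2)]$ is itself a maximal segment of $S_Y$, of length $2\lambda$. I would then repeat verbatim the change of coordinates from the proofs of Proposition~\ref{segmento}, Corollary~\ref{polygonal} and Theorem~\ref{abiertotodo2d}: take the bases
$$\B_X=\left\{\frac{x_2-x_1}{\|x_2-x_1\|_X},\ \frac{x_1+x_2}{2}\right\},\qquad \B_Y=\left\{\frac{\tau(x_2)-\tau(x_1)}{\|\tau(x_2)-\tau(x_1)\|_Y},\ \frac{\tau(x_1)+\tau(x_2)}{2}\right\}.$$
In the resulting coordinates one has $x_1=(\lambda,1)$, $x_2=(-\lambda,1)$, while $\tau(x_1),\tau(x_2)$ are the same two points, and $\|(1,0)\|=1$ in both spaces. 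Exactly as in those proofs, Mankiewicz Theorem yields $\tau(\alpha,1)=(\alpha,1)$ for all $\alpha\in[-\lambda,\lambda]$, i.e. $\tau(\lambda_1x_1+\lambda_2x_2)=\lambda_1\tau(x_1)+\lambda_2\tau(x_2)$ whenever $\lambda_1,\lambda_2>0$ and $\lambda_1x_1+\lambda_2x_2\in S_X$. Thus $\tau$ is linear on the relative interior of $[x_1,x_2]$, which is the relatively open set promised above, and Theorem~\ref{abiertotodo2d} closes the argument.

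The steps are all routine given the earlier results; the two things I would check carefully are (i) that the relative interior of a maximal segment really is relatively open in $S_X$, and that being the identity on an open sub-segment in the adapted coordinates is precisely the hypothesis ``$\tau$ is linear on a relative open $U$'' required by Theorem~\ref{abiertotodo2d}; and (ii) that the non-strict-convexity hypothesis is used only to manufacture the initial segment, everything afterwards being machinery already developed. A self-contained variant could instead split into the cases $2\lambda\geq 1$ (where Proposition~\ref{segmento} applies directly) and $2\lambda<1$, but routing through Theorem~\ref{abiertotodo2d} is cleaner since it absorbs both.
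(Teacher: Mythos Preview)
Your proof is correct and follows essentially the same route as the paper: exhibit a segment in $S_X$, use Mankiewicz to see that $\tau$ is affine (hence the identity in adapted coordinates) on it, and then invoke Theorem~\ref{abiertotodo2d}. The paper's version is slightly leaner in that it neither passes to a maximal segment nor checks flatness of the endpoints, and it uses the simpler basis $\{x,x'\}$, $\{\tau(x),\tau(x')\}$ (so the segment is $\{(\lambda,1-\lambda):\lambda\in[0,1]\}$) rather than the midpoint/direction basis; but these are cosmetic differences, not a different argument.
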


\begin{proof}
Suppose $\XnX$ is a two-dimensional normed space and $[x,x']\subset S_X$, 
with $x'\neq \pm x$. From Mankiewicz Theorem we know that any onto isometry 
$\tauSXY$ is affine on $[x,x']$ and by Tingley's Theorem, $\{\tau(x),\tau(x')\}$ 
is a basis of $Y$. Taking coordinates with respect to these bases, we have 
$(\lambda,1-\lambda)\in S_X$, also $(\lambda,1-\lambda)\in S_Y$ and, moreover, 
$\tau(\lambda,1-\lambda)=(\lambda,1-\lambda)$ for every $\lambda\in[0,1]$. 
So, $\tau$ is linear on $[x,x']$ and now the result is clear from 
Theorem~\ref{abiertotodo2d}. 
\end{proof}

Before we proceed with the proof of Theorem~\ref{infinited}, we need this auxiliary result: 

\begin{prop}\label{normdetermines}
Let $\XnX$ be a strictly convex normed space, $x\in X$ and $U\subset S_X$ a relative open subset. 
There exists $V\subset X$, an open neighbourhood of $x$, such that every point in $V$ 
is determined by its distances to the points in $U$, i.e., if $y, y'\in V$ are such 
that $\|u-y\|_X=\|u-y'\|_X$ for every $u\in U$, then $y=y'$. 
\end{prop}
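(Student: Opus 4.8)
The plan is to show that in a strictly convex space, finitely many distance-to-points measurements locally pin down a point, and that we can harvest these finitely many points from the relative open set $U$. First I would fix $u_0 \in U$ and pass to a small relative open ball $U_0 \subset U$ around $u_0$. Since $X$ is finite-dimensional here (Proposition~\ref{normdetermines} is the lemma feeding into Theorem~\ref{infinited}; if the ambient space is allowed to be infinite-dimensional one works inside a suitable finite-dimensional subspace, but let me treat the working case), I would choose an affinely independent set $u_0, u_1, \ldots, u_n \in U_0$ whose affine span is all of $X$ — this is possible because $U_0$ is a nonempty relative open subset of the sphere and hence is not contained in any affine hyperplane. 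The point of strict convexity is the classical fact that for a strictly convex norm the distance function $\|\cdot - u\|_X$ is, roughly speaking, a "strictly quasi-convex" function whose sublevel sets are strictly convex bodies, so that two such bodies centered at distinct points $u, u'$ intersect in a set that cannot contain a segment transverse to the line through $u, u'$.

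Next I would make the determination precise. Suppose $y, y' \in V$ (with $V$ an open neighbourhood of $x$ to be chosen) satisfy $\|u_i - y\|_X = \|u_i - y'\|_X =: r_i$ for all $i = 0, \ldots, n$. Then $y$ and $y'$ both lie on the sphere $u_i + r_i S_X$ for each $i$; equivalently $y, y' \in \bigcap_i \mathrm{Bis}(\cdot)$-type sets, more precisely $y - y'$ lies in the common boundary of the balls $u_i + r_i B_X$ shifted appropriately. The key geometric claim is: if $\|u-y\|_X = \|u-y'\|_X$ for $u$ ranging over an affinely spanning set, and $y \neq y'$, we get a contradiction with strict convexity. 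Indeed, writing $m = \frac12(y+y')$, the condition $\|u - y\| = \|u - y'\|$ says $u \in \mathrm{Bis}(y, y')$; in a strictly convex space $\mathrm{Bis}(y,y')$ is a "thin" set — it is the graph of a function over the hyperplane orthogonal (in the isosceles sense) to $y - y'$, in particular it is nowhere dense and contains no relative open piece of any sphere $u_0 + \rho S_X$. So the $u_i$ cannot all lie on $\mathrm{Bis}(y,y')$ unless... — wait, they can lie on a bisector since a bisector is $(n-1)$-dimensional and we only have $n+1$ points. The honest argument must instead use that the $u_i$ lie on $S_X$ itself and are close together: the right statement is that the map $z \mapsto (\|u_0 - z\|, \ldots, \|u_n - z\|)$ is injective on a neighbourhood of $x$. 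For this I would invoke a compactness/continuity argument: the global map $z \mapsto (\|u_i-z\|)_i$ is injective on all of $X$ when $X$ is strictly convex and $\{u_i\}$ affinely spans (two points equidistant from an affinely spanning set, in a strictly convex space, must coincide — this is the standard "strictly convex norms have at most one point equidistant from $n+1$ affinely independent points" fact, provable by noting that the set of points equidistant from $u_0, u_1$ is a hyperplane-like bisector and intersecting successively), hence certainly injective on $V$ for any $V$.

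So the cleanest route: prove the lemma that in a strictly convex $X$, if $u_0, \ldots, u_n$ are affinely independent and $\|u_i - y\|_X = \|u_i - y'\|_X$ for all $i$ then $y = y'$. One does this by induction on the number of constraints using strict convexity of $B_X$: the first constraint forces $y, y'$ symmetric across $\mathrm{Bis}(u_0,u_1)$, and iterating with $\mathrm{Bis}(u_0,u_i)$ one shows $y - y'$ is orthogonal (isosceles) to each $u_i - u_0$; but the only vector "isosceles orthogonal" to an affinely spanning family of directions, in a strictly convex space, is $0$. Then $V$ can be taken to be any open neighbourhood of $x$ (even $V = X$), and the $u_i$ are extracted from $U$ as above. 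I expect the main obstacle to be making the "isosceles-orthogonal to a spanning set implies zero" step rigorous in the non-inner-product setting — strict convexity gives uniqueness of the metric projection and hence that $\mathrm{Bis}(u,u')$ meets each line in the direction $u-u'$ in exactly one point, and careful bookkeeping of these intersections across $i = 1, \ldots, n$ yields $y = y'$; alternatively one can argue purely by a dimension-counting/continuity argument that the finitely many bisector hypersurfaces $\mathrm{Bis}(u_0, u_i)$ intersect in a single point near $x$ because their "normal directions" $u_i - u_0$ span. That localization is exactly why the statement is phrased with a neighbourhood $V$ rather than globally.
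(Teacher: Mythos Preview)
Your approach has a genuine gap: the key lemma you aim for---that in a strictly convex $n$-dimensional space distances to $n+1$ affinely independent points determine every point of $X$---is false. The paper itself essentially supplies a counterexample just after Conjecture~\ref{conjrn}: in $(\R^3,\|\cdot\|_3)$ one finds $x\in S_X$ and a basis $v_1,v_2,v_3\subset\Bis(x)=\Bis(x,-x)$; then $v_1,v_2,v_3,-v_1$ are four affinely independent points (linear independence of the $v_i$ gives this) all lying in $\Bis(x,-x)$, so $\|u_i-x\|=\|u_i+x\|$ for each of these four $u_i$ while $x\neq -x$. Conceptually, ``$\|u_i-y\|=\|u_i-y'\|$ for all $i$'' says exactly that $u_0,\ldots,u_n\in\Bis(y,y')$, and in a non-Euclidean strictly convex space this bisector is a curved $(n-1)$-dimensional hypersurface, which has no trouble containing $n+1$ affinely independent points. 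Your sketched induction via ``isosceles orthogonal to a spanning family implies zero'' fails for the same reason (the same $x$ is isosceles orthogonal to the spanning set $v_1,v_2,v_3$), and the preliminary step ``$y,y'$ symmetric across $\Bis(u_0,u_1)$'' confuses $u_i\in\Bis(y,y')$ with $y\in\Bis(u_0,u_i)$. Finally, the proposition is stated and used in Theorem~\ref{infinited} for arbitrary strictly convex spaces, so a finite-dimensional reduction is not available.

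The paper's proof is entirely different and is genuinely local (see Remark~\ref{remrm}). It argues by contradiction: take sequences $y_n,y_n'\to x$ with $U\subset\Bis(y_n,y_n')$. The map $u\mapsto (u-x)/\|u-x\|$ sends $U$ onto a relatively open piece of $S_X$, so $x$ is interior to $\conv\big(U\cup(2x-U)\big)$, hence so are $y_n,y_n'$ for large $n$. Since each bisector is symmetric about the midpoint $(y_n+y_n')/2$, one gets $y_n\in[u,u']$ for some $u,u'\in\Bis(y_n,y_n')$; strict convexity then forces $y_n'$ onto the same segment at the same distances from the endpoints, giving $y_n'=y_n$.
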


\begin{proof}
Suppose on the contrary that there are two sequences $(y_n), (y'_n)$ 
that converge to $x$ and such that $U\subset\Bis(y_n, y'_n)$ for every $n$. 

We may suppose $x\not\in \cl{U}$, the other case is obvious. Thus, the map 
$$u\in U\mapsto\sigma(u)=(u-x)/\|u-x\|_X\in S_X$$ 
is well-defined and continuous --even Lipschitz, actually. 
As $X$ is strictly convex, no line has more than two points in common with $S_X$, 
so $\sigma$ is nearly an injective map. Namely, for each $y\in S_X$, there are 
at most two points whose image is $y$ and, moreover, if $\sigma(u)=\sigma(u')$, 
then $\sigma$ is injective in a relative neighbourhood of $u$. 

So, $\sigma(U)$ contains a relative open subset of $S_X$, and this implies that $x$ is 
interior to the convex hull of $U\cup (2x-U)$. This implies that also $y_n, y_n'$ 
are interior to it for big $n$. 

But every bisector is symmetric with respect to the middle point of 
the segment, i.e., $z\in\Bis(y_n, y'_n)$ if and only if $y_n+y'_n-z\in\Bis(y_n, y'_n)$. 
Indeed, $\|z-y_n\|_X=\|z-y'_n\|_X$ implies $\|y_n+y'_n-z-y_n\|_X=\|y_n+y'_n-z-y'_n\|_X$, 
so the symmetry follows. Taking into account that $(y_n)\to x, (y'_n)\to x$ and 
$(y_n+y_n')/2\to x$, it is clear that, for big $n$, we will have both 
$y_n$ and $y_n'$ in the convex hull of 
$$\Bis(y_n, y'_n)\cup((y_n+y'_n)-\Bis(y_n, y'_n))=\Bis(y_n, y'_n).$$
This means that there are $\lambda\in[0,1]$ and $u, u'\in\Bis(y_n, y'_n)$ such 
that $y_n=\lambda u+(1-\lambda)u'$. 
We may rewrite this as: 
$$\|y_n-u\|_X=\|y'_n-u\|_X, \|y_n-u'\|_X=\|y'_n-u'\|_X \mathrm{\ and\ }y_n\in[u,u'].$$
As $\nX$ is strictly convex, the points inside a segment are determined by its distances 
to the endpoints. Indeed, $\|y_n-u\|_X+\|y_n-u'\|_X=\|u-u'\|_X$ if and only if 
$y_n\in[u,u']$. So, we have $y'_n\in[u,u']$, too. Moreover, 
$\lambda=\|y_n-u\|_X/\|u-u'\|_X$, so $y'_n=y_n$ and we are done. 
\end{proof}

\begin{remark}\label{remrm}
Our {\em a priori} impression was that there would be some result in the 
literature stating something like ``The distances to a relative open subset of 
$S_X$ determine every point in $X$ whenever $\nX$ is strictly convex". 
However, we have found nothing like this and, moreover, it seems much harder than 
expected to prove anything more general than Proposition~\ref{normdetermines}. 
Actually, a result as the supposed-to-exist one would be enough for 
proving~Conjecture~\ref{conjrn}.
\end{remark}

\begin{theorem}\label{infinited}
Let $\XnX$ be a strictly convex normed space, $\nX'$ an equivalent norm defined on $X$ 
and $\tauSXX'$ an onto isometry. If the set of fixed points of $\tau$ has nonempty 
interior, then $\tau$ is the identity and $\nX'=\nX$. 
\end{theorem}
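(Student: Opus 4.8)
The plan is to combine Proposition~\ref{normdetermines} with a connectedness argument on $S_X$, the new ingredient being that near the fixed set the two norms are forced to coincide, so that the obstruction of ``two different norms'' only survives \emph{away} from that set. Throughout, ``interior'' means relative interior in $S_X$. If $\dim X\le 1$ the statement is immediate from surjectivity, so assume $\dim X\ge 2$, whence $S_X$ is connected. Let $F=\{x\in S_X:\tau(x)=x\}$. Since $x\mapsto\tau(x)-x$ is continuous from $S_X$ into $X$, $F$ is closed in $S_X$, and $U:=\interior_{S_X}F$ is nonempty by hypothesis; every $x\in F$ has $\|x\|_X=\|x\|'_X=1$, and if $x_0\in\cl U$ then, picking $u_n\in U$ with $u_n\to x_0$, continuity of $\tau$ gives $\tau(x_0)=\lim\tau(u_n)=\lim u_n=x_0$, so $\cl U\subset F$.

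Next I would show that $S_X$ and $S_X'$ coincide on a neighbourhood of $U$. Fix $u_0\in U$ and $\rho>0$ with $\{x\in S_X:\|x-u_0\|_X<\rho\}\subset U$. If $q\in S_X'$ and $\|q-u_0\|'_X<\rho$, then $\tau^{-1}$ (an onto isometry with $\tau^{-1}(u_0)=u_0$, since $u_0\in F$) sends $q$ to a point of $S_X$ at $\|\cdot\|_X$-distance $<\rho$ from $u_0$, hence into $U$; as $\tau^{-1}|_U=\Id$ this forces $q=\tau^{-1}(q)\in U$. Thus a $\|\cdot\|'_X$-cap of $S_X'$ about $u_0$ lies in $U\subset S_X$, and symmetrically a $\|\cdot\|_X$-cap of $S_X$ about $u_0$ lies in $U\subset S_X'$; since the two norms are equivalent there is $r>0$ with $S_X\cap B_X(u_0,r)=S_X'\cap B_X(u_0,r)\subset U$. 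Letting $u_0$ range over $U$ yields an open set $\mathcal O\supset U$ on which $S_X=S_X'$, so $\|\cdot\|_X$ and $\|\cdot\|'_X$ agree on the open cone $\{tu:t>0,\ u\in U\}$. (The equality case of the triangle inequality, pushed through $\tau$, also shows $(X,\|\cdot\|'_X)$ is strictly convex, which would let one apply Proposition~\ref{normdetermines} to either space.)

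It now suffices to prove that $U$ is closed in $S_X$: connectedness then gives $U=S_X$, so $\tau=\Id$, hence $S_X=\tau(S_X)=S_X'$ and $\|\cdot\|_X=\|\cdot\|'_X$. Let $x_0\in\cl U$, so $\tau(x_0)=x_0$. Apply Proposition~\ref{normdetermines} to $\XnX$, the point $x_0$ and the relatively open set $U$: there is an open $V\ni x_0$ on which $z\mapsto(\|z-u\|_X)_{u\in U}$ is injective. Shrinking $V$ (using continuity of $\tau$ and $\tau(x_0)=x_0$) we may assume $\tau(V\cap S_X)\subset V$. For $z\in V\cap S_X$ and $u\in U$ one has $\|z-u\|_X=\|\tau(z)-u\|'_X$, because $\tau$ is an isometry and $\tau(u)=u$; if moreover $\|\tau(z)-u\|'_X=\|\tau(z)-u\|_X$ for every $u\in U$, then $z$ and $\tau(z)$ share the same $\|\cdot\|_X$-distance profile to $U$ and both lie in $V$, so $\tau(z)=z$. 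Hence a relative neighbourhood of $x_0$ is fixed, i.e. $x_0\in U$, and $U$ is closed.

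The main obstacle is precisely the identification $\|\tau(z)-u\|'_X=\|\tau(z)-u\|_X$: so far the norms are only known to agree on the open cone over $U$ (and, as $F$ grows, on $F-F$), whereas the chords $\tau(z)-u$ joining a point near $x_0$ to a point of $U$ need not lie in that cone. I would try to remove it by enlarging the fixed set and the coincidence region simultaneously, e.g. by proving that the open set $\mathcal G=\{x\in S_X:\ S_X=S_X'\ \text{and}\ \tau=\Id\ \text{on some neighbourhood of }x\}$, which contains $U$ by the second step, is also closed: at a boundary point $x_0$ one has $\tau(x_0)=x_0$, the set $\cl{\mathcal G}-\cl{\mathcal G}$ lies in the closed cone where the norms agree, and on the $\mathcal G$-side of $x_0$ the coincidence $S_X=S_X'$ together with strict convexity should force the coincidence through $x_0$. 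The real work is to choose the reference points in $U$ far enough from $x_0$ that the relevant chord directions fall inside the coincidence cone; once the norms are known to match on those chords, Proposition~\ref{normdetermines} closes the argument as above.
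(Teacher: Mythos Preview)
Your overall plan---show the fixed set is both closed and open in the connected sphere---is exactly the paper's, and you correctly isolate the one genuine difficulty: to invoke Proposition~\ref{normdetermines} near a fixed point $e$ you need $\|\tau(w)-u\|'_X=\|\tau(w)-u\|_X$ for $w$ near $e$ and $u\in U$, whereas the norm coincidence you have established (on the cone over $U$) does not cover these chord directions.

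Where your proposal stalls is in the fix. You propose pushing the reference points $u$ around inside $U$ so that the chord $w-u$ lands in the cone over $U$ (or over a growing set $\mathcal G$); in general there is no reason that cone and the chord directions should meet, so this plan does not close the argument. The paper's idea is the mirror image: the chords \emph{themselves} generate a new coincidence cone. Since $e,u\in F$ one has $\|e-u\|_X=\|\tau(e)-\tau(u)\|'_X=\|e-u\|'_X$, hence $(u-e)/\|u-e\|_X\in S_X\cap S_X'$. By the same local-injectivity reasoning used inside the proof of Proposition~\ref{normdetermines} (strict convexity makes $u\mapsto(u-e)/\|u-e\|_X$ nearly injective on $U$, so its image contains a relative open set), these normalised chords cover a neighbourhood $V$ of $(v-e)/\|v-e\|_X$ in $S_X$, and therefore $\|\cdot\|_X=\|\cdot\|'_X$ on the whole cone over $V\cup(-V)$. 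Now for $w\in S_X$ close to $e$ and $u\in U$ close to a fixed $v$, both $w-u$ and (by continuity, since $\tau(e)=e$) $\tau(w)-u$ lie in this cone, which gives the missing equality $\|\tau(w)-u\|'_X=\|\tau(w)-u\|_X=\|w-u\|_X$ at once; Proposition~\ref{normdetermines} applied in $\XnX$ then yields $\tau(w)=w$.

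In short, the missing insight is that the relevant ``coincidence cone'' is not the one over $U$ but the one over the chord directions $\{(u-e)/\|u-e\|_X:u\in U\}$, and that cone comes for free from $e,u\in F$. Your step~2 (that $S_X=S_X'$ near $U$) is correct but is not what bridges the gap.
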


\begin{proof}
Let $F=\{x\in S_X:\tau(x)=x\}$ be the set of fixed points of $\tau$, we will denote 
its interior by $U$. As $\tau$ is continuous, $F$ is closed. 
We shall see that it is also relative open, so $F$ must be the whole sphere $S_X$. 

Suppose $e\in F$ and take $v\in U$.
As $\nX$ is strictly convex, the distances $\{\|e-u\|_X:u\in U\}$ determine
$S_{X}$ in a neighbourhood of $(v-e)/\|v-e\|_X$, say $V$, so we have
$\|w\|_X=\|w\|_X'$ whenever $w/\|w\|_X\in V$. This means, obviously, that
$V\subset S_X'$, so we have $\|w-u\|_X'=\|w-u\|_X$ for every $w$ and $u$ in
(not necessarily relative) neighbourhoods of $e$ and $v$ respectively, i.e.,
$w\in e+\e B_X, u\in v+\e B_X$. As $\nX$ and $\nX'$ are equivalent,
the equality holds for every $w\in e+\delta B_X', u\in v+\delta B_X'$ for some
$\delta>0$. In particular, if $\|w\|_X=1$ and $w\in (e+\delta B_X')\cap (e+\e B_X)$,
then $\|w-u\|_X'=\|w-u\|_X=\|\tau(w)-u\|_X'$ for every
$u\in S_X\cap (v+\e B_X)=S_X'\cap (v+\e B_X')$.
By Proposition~\ref{normdetermines}, we are done.
\end{proof}

\begin{theorem}\label{abiertotodostrcvx}
Let $\XnX$ and $\YnY$ be finite dimensional strictly convex normed spaces and 
$\tauSXY$ an onto isometry between their unit spheres. If there is a relative open 
$U\subset S_X$ where $\tau$ is linear, then $\tau$ is linear on $S_X$. 
\end{theorem}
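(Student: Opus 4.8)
The plan is to reduce Theorem~\ref{abiertotodostrcvx} to the two pieces already in hand: Lemma~\ref{fundamental} (linearity on a sphere is detected by the coordinate map being the identity) and Theorem~\ref{infinited} (a strictly convex norm is rigid once an isometry fixes an open set pointwise). Concretely, suppose $\tau$ is linear on a relative open $U\subset S_X$. Pick $n$ points $x_1,\dots,x_n\in U$ that form a basis of $X$ and are affinely positioned so that $U$ actually ``sees'' a basis of $X$; since $U$ is open this can be done. By Tingley's Theorem (Theorem~\ref{Tingley}), $\tau$ is odd, so $\{\tau(x_1),\dots,\tau(x_n)\}$ is a basis of $Y$ (linearity of $\tau$ on $U$ already forces the $\tau(x_i)$ to be linearly independent). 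Identify $(\XnX,\B_X)$ with $\RnXsp$ via $\phi_X$ and $(\YnY,\B_Y)$ with $\RnYsp$ via $\phi_Y$ as in Lemma~\ref{fundamental}, and set $\tau'=\phi_Y\circ\tau\circ\phi_X^{-1}:S_{\nX'}\to S_{\nY'}$, an onto isometry with $\tau'(e_i)=e_i$.

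The point of this change of coordinates is that on $\phi_X(U)$, the map $\tau'$ is not merely linear but \emph{equal to the identity}: linearity together with $\tau'(e_i)=e_i$ forces $\tau'(\sum\lambda_ix_i)=\sum\lambda_i e_i$ whenever that point lies in $\phi_X(U)$. Thus $\tau'$ is an onto isometry between the spheres of two (a priori different) norms $\nX'$ and $\nY'$ on $\R^n$ whose set of fixed points contains the nonempty relative open set $\phi_X(U)$. At this stage I would invoke Theorem~\ref{infinited}: a strictly convex norm (and $\nX'$ is strictly convex because $\phi_X$ is a linear isometry and $\XnX$ is strictly convex) cannot be isometrically mapped to an equivalent norm by a map fixing an open set unless the map is the identity and the norms coincide. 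One small bookkeeping point: Theorem~\ref{infinited} is stated for an isometry $\tau:S_X\to S_X'$ between the spheres of two norms on the \emph{same} vector space; here $\tau'$ goes between spheres of $\nX'$ and $\nY'$ on the same space $\R^n$, with the two norms a priori distinct, so the hypothesis matches once we note that on a finite dimensional space all norms are equivalent. Applying it yields $\tau'=\Id$ and $\nY'=\nX'$.

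Having shown $\tau'=\Id$ on all of $S_{\nX'}$, Lemma~\ref{fundamental} immediately gives that $\tau$ is the restriction of a linear isometry of $X$ onto $Y$; in particular $\tau$ is linear on $S_X$, which is the claim. So the argument is essentially: choose a basis inside $U$, pass to coordinates to upgrade ``linear on $U$'' to ``identity on an open set'', and then quote Theorem~\ref{infinited}.

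The step I expect to carry the real weight is the transition described in the second paragraph — making sure that ``linear on $U$'' genuinely becomes ``the identity on $\phi_X(U)$'' and that the resulting $\tau'$ satisfies the precise hypotheses of Theorem~\ref{infinited}. Two things need care: first, that $U$ contains a basis of $X$ (clear since $U$ is open and nonempty, spanning a neighbourhood of some point) and that $\tau$ maps this basis to a basis of $Y$ (here one uses that $\tau$ restricted to $U$ is linear and injective, plus Tingley oddness to get the target spanning $Y$); second, that the relative open set of coincidence for the norms $\nX'$ and $\nY'$ is honestly there — i.e.\ that $\phi_X(U)$ is a set on which both the isometry acts as identity \emph{and} the two spheres agree, so that $\tau'$ restricted near that set is the identity in the strong sense Theorem~\ref{infinited} needs. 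Everything else is a direct appeal to results already proved.
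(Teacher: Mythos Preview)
Your proposal is correct and follows exactly the route the paper takes: identify $X$ and $Y$ with $\R^n$ via bases chosen so that linearity on $U$ becomes the identity on an open set, then invoke Theorem~\ref{infinited} and read the conclusion back through Lemma~\ref{fundamental}. The paper compresses this into a single sentence (``identify $\XnX$ and $\YnY$ with the corresponding $\R^n$ and apply Theorem~\ref{infinited}''), while you have spelled out the bookkeeping --- that $U$ contains a basis, that $\tau$ carries it to a basis, and that the two norms on $\R^n$ are automatically equivalent --- all of which is handled correctly.
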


\begin{proof}
We just need to identify $\XnX$ and $\YnY$ with the corresponding $\R^n$ and apply 
Theorem~\ref{infinited}. 
\end{proof}

\section{Final examples and remarks}

This final section includes some less general results that, however, illustrate 
to which extent our approach can work. The end of the section includes a subsection 
where we introduce the notion of {\em normed curvature}. 

\subsection{Absolute norms in the plane}

Throughout this subsection we will restrict ourselves to the case in which $\nX, \nY$ 
are absolute, normalized, norms on $\R^2$. Please observe that this implies that 
the symmetries with respect to the axes are linear isometries in both 
$(\R^2,\nX)$ and $(\R^2,\nY)$. 

\begin{prop}\label{abstanaka}
Let $\tauSXY$ be an isometry and suppose that the only isometries of $S_X$ are $\pm\Id_X, 
\pm\phi$, where $\Id_X$ is the identity and $\phi$ is the symmetry with respect 
to the horizontal axis. Then $\tau$ is linear and, furthermore, if $\tau$ is 
not the identity, then it is one of the following maps:
\begin{itemize}
\item The rotation of angle $\pi/2, \pi$ or $3\pi/2$ around the origin. 
\item The symmetry with respect to one of the following lines: 
$$\langle(1,0) \rangle, \langle(0,1) \rangle, \langle(1,1) \rangle, \langle(1,-1) \rangle.$$
\end{itemize}
\end{prop}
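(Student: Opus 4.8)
The plan is to use heavily that both norms are absolute, since this already forces the coordinate reflections to be linear isometries of each sphere and so lets us transport the hypothesis on $\mathrm{Isom}(S_X)$ to $S_Y$ through $\tau$. Write $M=-\phi$ for the reflection across the vertical axis; both $\phi$ and $M$ are linear isometries of $\RdXsp$ and of $\RdYsp$. Since $\tau$ is an onto isometry, $g\mapsto\tau g\tau^{-1}$ is a group isomorphism $\mathrm{Isom}(S_X)\to\mathrm{Isom}(S_Y)$; as $\{\pm\Id,\pm\phi\}$ is an order-$4$ subgroup of $\mathrm{Isom}(S_Y)$ while $\mathrm{Isom}(S_X)$ has order $4$ by hypothesis, we get $\mathrm{Isom}(S_Y)=\{\pm\Id_Y,\pm\phi\}$ as well.

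First I would pin down $\tau$ on the four points $\pm e_1,\pm e_2$. The antipodal map is the unique fixed-point-free element of each of these two groups, so $\tau(-x)=-\tau(x)$ (which is anyway Tingley's Theorem), and the unordered pair $\{\phi,M\}$ of reflections of $S_X$ is carried to $\{\phi,M\}$ in $S_Y$. Because $\mathrm{Fix}(\phi)\cap S_X=\{\pm e_1\}$ and $\mathrm{Fix}(M)\cap S_X=\{\pm e_2\}$ (by normalization), $\tau$ maps $\{\pm e_1,\pm e_2\}$ onto $\{\pm e_1,\pm e_2\}$, sending antipodal pairs to antipodal pairs and the two ``axis pairs'' to axis pairs. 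An elementary count then shows that $\tau$ restricted to these four points agrees with the restriction of some element $g$ of the dihedral group $D_4$ of order $8$ (the linear symmetries of the square): the rotations of angle $\pi/2,\pi,3\pi/2$, the reflections across $\langle(1,0)\rangle,\langle(0,1)\rangle,\langle(1,1)\rangle,\langle(1,-1)\rangle$, and $\Id$. Each such $g$ sends the class of absolute normalized norms on $\R^2$ to itself.

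Next, set $\psi=g^{-1}\circ\tau$, an onto isometry from $S_X$ to $S_Z$, where $Z$ is $\R^2$ equipped with the absolute normalized norm whose sphere is $g^{-1}(S_Y)$, and now $\psi(e_i)=e_i$, $\psi(-e_i)=-e_i$. Repeating the conjugation argument for $\psi$, and using that $\psi$ fixes $\{\pm e_1\}$ and $\{\pm e_2\}$ pointwise, one gets $\psi\circ\phi=\phi\circ\psi$ and $\psi\circ M=M\circ\psi$ on the spheres. Since $\psi$ is a homeomorphism fixing $\pm e_1,\pm e_2$, a connectedness argument shows it maps the closed first-quadrant arc $A(e_1,e_2)\subset S_X$ onto $A(e_1,e_2)\subset S_Z$. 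For $x=(a,b)$ on that arc, absoluteness and normalization give $\|x-Mx\|_X=\|(2a,0)\|_X=2a$ and $\|x-\phi x\|_X=\|(0,2b)\|_X=2b$; writing $\psi(x)=(a',b')$ in the first-quadrant arc of $S_Z$, the intertwining relations together with norm preservation give $2a'=\|\psi(x)-M\psi(x)\|_Z=\|x-Mx\|_X=2a$ and likewise $b'=b$, so $\psi(x)=x$. Hence $\psi=\Id$ on $A(e_1,e_2)$, and applying $\phi$, $M$, $-\Id$ we get $\psi=\Id$ on all of $S_X$; in particular $S_Z=S_X$ and $\tau=g|_{S_X}$ is linear. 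The ``furthermore'' part is then just the list $D_4\setminus\{\Id\}$.

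The main obstacle is bookkeeping rather than depth: one must check that the conjugation argument really forces $\tau$ to permute $\{\pm e_1,\pm e_2\}$ compatibly with a $D_4$ element (not merely that reflections go to reflections), and that for a general absolute norm the first-quadrant arc is a genuine connected arc with endpoints $e_1,e_2$, so that the homeomorphism-preserves-arcs step is legitimate. Once those are in place, the absolute-norm identity $\|x-Mx\|=2a$ does all the quantitative work, with no appeal to the Monotonicity Lemma and no use of strict convexity.
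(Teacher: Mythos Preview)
Your argument is correct and takes a genuinely different route from the paper's proof. Both proofs begin the same way, using conjugation to see that $\mathrm{Isom}(S_Y)=\{\pm\Id,\pm\phi\}$ and that $\tau$ sends $\{\pm e_1,\pm e_2\}$ to itself, so that after composing with a suitable $g\in D_4$ one may assume $\tau$ fixes $\pm e_1,\pm e_2$. From here the paths diverge.

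The paper splits into cases: if $\XnX$ is not strictly convex it invokes Corollary~\ref{nonstrcvx}; if it is, it uses the Monotonicity Lemma to place $\tau(a,b)$ in the open first quadrant, and then uses strict convexity to argue that $(a,-b)$ is the \emph{unique} second point of $S_X$ at distance $\|(a,b)-e_1\|$ from $e_1$, forcing $\tau(a,-b)=(c,-d)$ and hence $b=d$ (and symmetrically $a=c$). The uniqueness step genuinely needs strict convexity, which is why the case split is there.

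Your approach avoids both the case split and the Monotonicity Lemma. Having reduced to $\psi$ fixing all four axis points, you observe (via the same conjugation argument, now using that $\{\pm\Id,\pm\phi\}$ is normal in $D_4$) that $\psi$ \emph{commutes} with $\phi$ and with $M$. This yields $\psi(\phi x)=\phi(\psi x)$ directly, without any uniqueness claim, and then the absolute--normalized identities $\|x-\phi x\|=2b$, $\|x-Mx\|=2a$ read off both coordinates at once. The topological step replacing the Monotonicity Lemma (that a sphere homeomorphism fixing the four axis points must carry the first-quadrant arc of $S_X$ onto that of $S_Z$) is routine, since each of the four open arcs obtained by deleting $\pm e_1,\pm e_2$ is determined by its pair of endpoints.

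In short: the paper trades on strict convexity (plus an external result for the non-convex case) to recover the reflected point; you trade on the commutation relations, which makes the argument uniform, shorter, and independent of Corollary~\ref{nonstrcvx} and the Monotonicity Lemma.
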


\begin{proof}
Let $\tau$ be such an isometry. The group of isometries of $S_Y$ is isomorphic 
to that of $S_X$, namely its isometries are $\pm\Id_Y$ and $\pm\psi$, where 
$\psi=\tau\circ\phi\circ\tau^{-1}$. As the symmetries with respect to the axes 
are also isometries of $Y$, $\psi$ must be one of these symmetries. 

In particular, the fixed points of $\psi$ are $\pm(1,0)$ or $\pm(0,1)$ and 
the fixed points of $\phi$ are $\pm(1,0)$ and this means that $\tau(1,0)$ 
is either $(1,0), (0,1), (-1,0)$ or $(0,-1)$. 

Now, we may suppose that $\XnX$ is strictly convex, the other case is an immediate
consequence of Corollary~\ref{nonstrcvx}.
Suppose $\tau(1,0)=(1,0)$ and $\tau(0,1)=(0,1)$. 
If we show that $\tau=\Id$, then we are done because we can compose any of the other 
isometries with a linear isometry that makes the composition send 
$(1,0)$ to $(1,0)$ and $(0,1)$ to $(0,1)$.

Given any $(a,b)\in S_X$, with $a, b>0$, it is easy to determine the only 
$x\in S_X,$ $x\neq(a,b)$ such that $\|(a,b)-(1,0)\|_X=\|x-(1,0)\|_X$. Namely, 
$x$ is $(a,-b)$, and the only $x'\in S_X, x'\neq(a,b)$ such that 
$\|(a,b)-(0,1)\|_X=\|x'-(0,1)\|_X$ is obviously $x'=(-a,b)$. 
Please observe that these points are uniquely determined because of the 
strict convexity of $\XnX$ and that $\|(a,b)-x\|_X=2b, \|(a,b)-x'\|_X=2a$. 
Now, if $\tau(a,b)=(c,d)$, then the Monotonicity Lemma implies that 
$c, d>0$ and the equality $\|(a,b)-(1,0)\|_X=\|x-(1,0)\|_X$ implies 
$\|(c,d)-(1,0)\|_Y=\|\tau(x)-(1,0)\|_Y$. With the same argument than before, 
we obtain $\tau(x)=(c,-d)$, but $2a=\|(a,b)-x\|_X=\|(c,d)-\tau(x)\|_Y=2c$, so 
$c=a$. By symmetry, we have also $d=b$, so $\tau=\Id$. 
\end{proof}

\begin{remark}\label{remlen}
This statement could be seen as a cheat because it restricts the conclusion 
to norms for which we already know the group of isometries of {\em its sphere.} 
Of course, when we are trying to prove that the group of isometries of every 
sphere coincides with the group of linear isometries of the space, this may 
seem unfair. 

However, it is easy to find some norms that fit in Proposition~\ref{abstanaka}. Namely, 
if the points $\pm(1,0)$, or $\pm(0,1)$, are unique in $S_X$ in any intrinsic, 
metric, sense, then the sphere will have just the above referred isometries. 
Think, for example, in the norm whose unit sphere is a lens, see Figure~\ref{diblen}. 
\begin{figure}
\includegraphics[width=0.3\paperwidth]{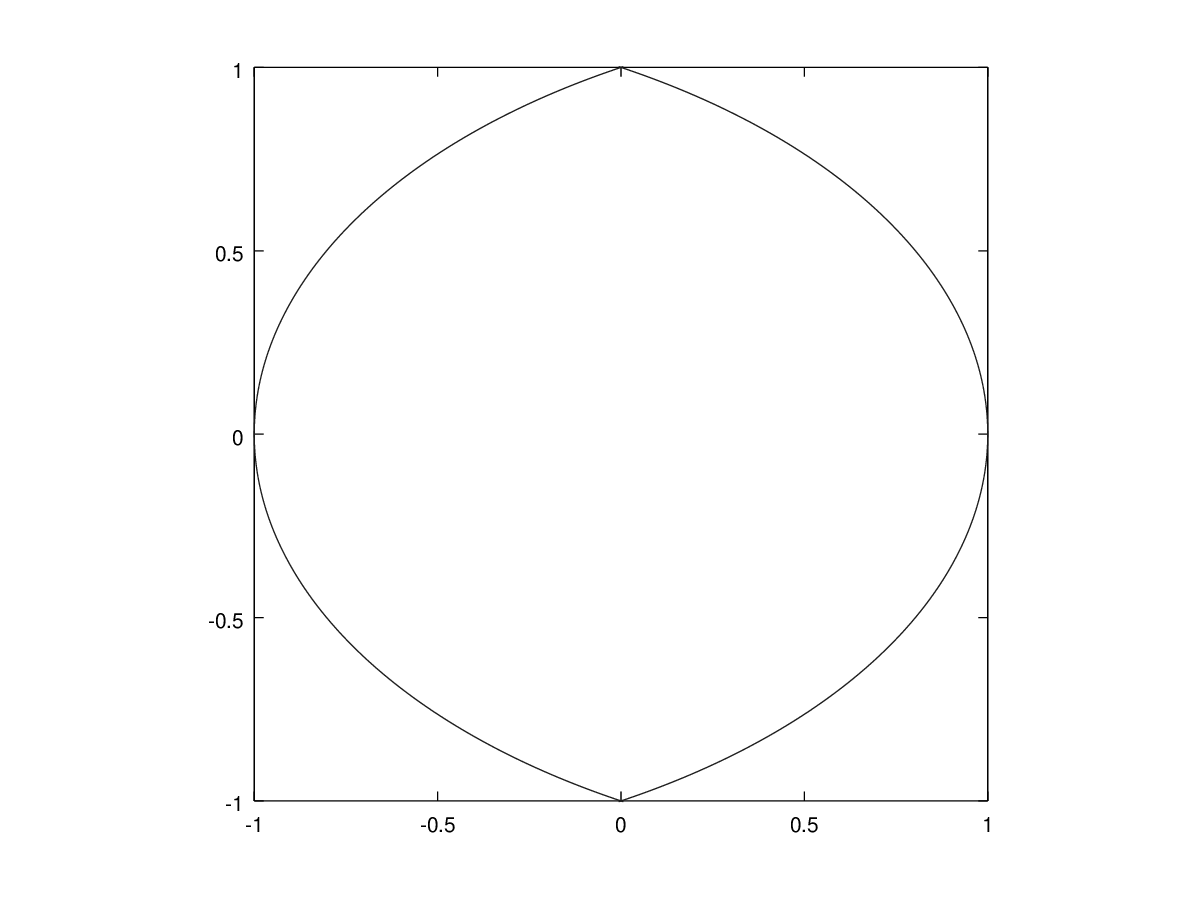}
\includegraphics[width=0.3\paperwidth]{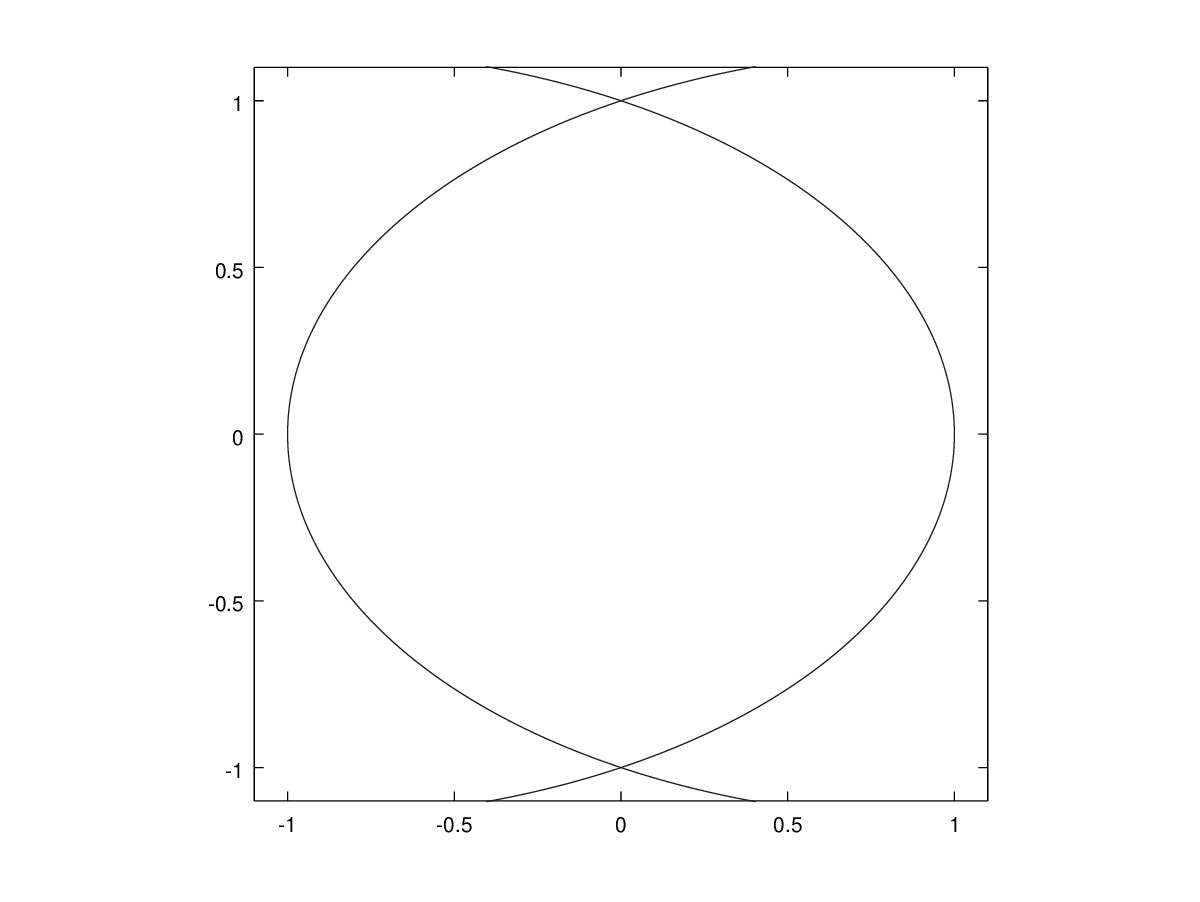}
\caption{The unit sphere in Remark~\ref{remlen}, whose associated ball is the  
intersection of two ellipses. }\label{diblen}
\end{figure}
If we consider 
the point $(0,1)$, it is clear that it is unique in some sense, namely 
$(0,1)$ and $(0,-1)$ are the only points where $S_X$ is not a 
differentiable curve. Being a point of differentiability is not, 
to the best of our knowledge, something that can be said in terms of distances 
between points of $S_X$, but there is something similar than we can say. 

Namely, if we take for each point $x\in S_X$ and every $2>\delta>0$ the only 
points $a_x(\delta),a_x'(\delta)$ such that 
$$\|x-a_x(\delta)\|_X=\|x-a'_x(\delta)\|_X,$$
it is intuitively evident that the distances $\|a_x(\delta)-a_x'(\delta)\|_X$ are 
smaller when $x=(0,\pm 1)$ than when $x$ is any other point in $S_X$. We shall 
not prove this, but the impression is that, when $\delta\to 0$, 
$\|a_x(\delta)-a_x'(\delta)\|_X/\delta$ tends to 2 if $x\neq (0,\pm 1)$ 
and that this limit is smaller than 2 for $x=(0,\pm 1)$. 
And this {\em is} measured just by means of the distances between points in the sphere, 
this is closely related to Subsection~\ref{sbscurv}.  
\end{remark}

Now, we deal with some quite more usual norms. Let us say, as in~\cite{tanakaR2}, 
that a norm $\nX$ on $\R^2$ is symmetric when $\|(x_1,x_2)\|_X=\|(x_2,x_1)\|_X$ 
for every $x=(x_1,x_2)\in\R^2$. In the next result, we consider two norms 
$\nX, \nY$ in $\R^2$ with the following characteristics: 

\begin{enumerate}
\item They are normalized. 
\item They are absolute. 
\item They are symmetric. 
\item The only isometries $\phiSXX$ are the necessary for (2) and (3), i.e., 
the rotations of angle $0, \pi/2, \pi$ and $3\pi/2$ and 
the symmetries with respect to the axis and the diagonals. 
\end{enumerate}

\begin{coro}\label{symtanaka}
With the above hypothesis, every isometry $\tauSXY$ is linear and, moreover, it 
is either one of the isometries listed in $(4)$ or one of them composed with 
the rotation of angle $\pi/4$. 
\end{coro}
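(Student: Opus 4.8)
\emph{Proposed proof.} The idea is to bring $\tau$ into a normal position by exploiting the large symmetry group of the two spheres, and then to run the computation from the proof of Proposition~\ref{abstanaka}. By $(2)$ and $(3)$ the eight maps in $(4)$ are linear isometries of both $(\R^2,\nX)$ and $(\R^2,\nY)$; write $D$ for the group they form. By $(4)$, $D$ is the whole group of isometries of $S_X$, and since the group of isometries of $S_Y$ equals $\tau D\tau^{-1}$ it has eight elements too and hence coincides with $D$. Thus conjugation by $\tau$ is an automorphism of $D$: it fixes $-\Id$ by Tingley's Theorem and preserves the rotation subgroup (the unique cyclic subgroup of order four), so it maps the four reflections of $S_X$ to the four reflections of $S_Y$. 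As a reflection fixes exactly the two points where its axis meets the sphere, $\tau$ carries the eight ``vertices'' $S_X\cap(\text{the two axes and the two diagonals})$ bijectively onto the corresponding eight points of $S_Y$, and it sends the four axis points of $S_X$ --a single orbit of the rotation subgroup-- either to the four axis points, or to the four diagonal points, of $S_Y$.

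Composing $\tau$ on the left with a suitable element of $D$ (it acts transitively on each of those orbits), I may assume $\tau(1,0)=(1,0)$ in the ``axis case'' and $\tau(1,0)=p:=\frac{1}{\|(1,1)\|_Y}(1,1)$ in the ``diagonal case''; composing once more, if needed, with the reflection of $Y$ fixing this point (the reflection in the $x$-axis, available by $(2)$, or the coordinate swap, available by $(3)$), I may also assume $\tau(0,1)=(0,1)$, respectively $\tau(0,1)=r(p)$ with $r$ the rotation by $\pi/2$. In the axis case put $\sigma=\tau$. In the diagonal case let $\nu$ be the linear map with $\nu(p)=e_1$ and $\nu(r(p))=e_2$; it commutes with $r$, so it is a direct similarity, necessarily of angle $-\pi/4$, and in particular it permutes the four symmetry lines and so conjugates $D$ to itself; hence the norm obtained by transporting $\nY$ through $\nu$ is $D$-invariant --therefore absolute and symmetric--, normalized (because $\nu$ sends the unit vector $p$ to $e_1$) and strictly convex exactly when $\nX$ is; here put $\sigma=\nu\circ\tau$. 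In either case $\sigma$ is an onto isometry from $S_X$ onto the sphere of a normalized, absolute norm, with $\sigma(e_1)=e_1$ and $\sigma(e_2)=e_2$. If $\nX$ is not strictly convex, $\sigma$ extends to a linear isometry by Corollary~\ref{nonstrcvx}, and a linear map fixing a basis is the identity, so $\sigma=\Id$. If $\nX$ is strictly convex, so is the target norm --strict convexity is the metric property ``$\el2(z)=\{-z\}$ for every $z$'', which $\sigma$ respects-- and the argument in the proof of Proposition~\ref{abstanaka} applies verbatim: the Monotonicity Lemma keeps the signs of the coordinates, while absoluteness together with strict convexity single out the unique companion point at a prescribed distance from $e_1$ or $e_2$, and one gets $\sigma=\Id$ first on the open first quadrant, then, by the coordinate reflections, on all of $S_X$.

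Unwinding the compositions, $\tau=L^{-1}$ in the axis case and $\tau=L^{-1}\circ\nu^{-1}$ in the diagonal case, for some $L\in D$ and with $\nu^{-1}$ a direct similarity of angle $\pi/4$ whose (positive) scale factor is forced by normalization; in particular $\tau$ is linear and equals one of the eight isometries in $(4)$, or one of them composed with the rotation of angle $\pi/4$ (and in the latter case $S_Y=\nu^{-1}(S_X)$). I expect the diagonal case to be the main obstacle: one must notice that it can genuinely occur --for instance between the normalized $\ell_1$ and $\ell_\infty$ norms on $\R^2$, whose unit spheres are interchanged by a similarity of angle $\pi/4$-- and one must verify that after the $\pi/4$ change of coordinates the auxiliary norm still satisfies \emph{all} of hypotheses $(1)$--$(3)$ together with strict convexity, so that Proposition~\ref{abstanaka}'s computation can be reused unchanged; the dihedral bookkeeping that locates the image of $(1,0)$, and the routine separate treatment of the non-strictly-convex spheres, are the remaining points requiring care.
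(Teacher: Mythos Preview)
Your proof is correct and follows precisely the route the paper intends: the paper's own proof is the single line ``This is very similar to Proposition~\ref{abstanaka}'', and what you have written is exactly the natural adaptation of that argument to the larger symmetry group~$D$, including the explicit treatment of the ``diagonal'' case via the $\pi/4$ similarity~$\nu$ and the verification that the transported norm remains absolute, symmetric and normalized so that the computation from Proposition~\ref{abstanaka} can be reused verbatim. One minor remark: your phrase ``rotation of angle $\pi/4$'' in the conclusion should indeed be read as a direct similarity (as you yourself note), since a pure rotation need not carry $S_X$ onto $S_Y$; the paper's statement is equally informal on this point.
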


\begin{proof}
This is very similar to~Proposition~\ref{abstanaka}.
\end{proof}

\begin{remark}
This Corollary may seem superfluous, but it has some interesting consequences. 
Namely, this result applies for norms having $(1,0)$ and $(0,1)$ as interchangeable 
special points, like any $p$-norm in $\R^2$. If $p>2$ and 
$\nX=\norma_p$, then $(1,0)$ and $(0,1)$ are the isosceles orthogonal points in 
$S_X$ with least distance between them. So, every isometry $\phiSXX$ must send 
$(1,0)$ to $\pm(1,0)$ or $\pm(0,1)$. The opposite happens when $p<2$: 
$(1,0)$ and $(0,1)$ have the greatest distance between isosceles orthogonal points in 
the sphere. This means that every $\norma_p$ fulfils the hypotheses of the Corollary. 
\end{remark}

\subsection{A three-dimensional example}

Here we present an example of how our approach can be meaningful also in 
three-dimensional spaces. 

\begin{lemma}
Let $\norma_{hex}$ be the norm on $\R^2$ whose unit sphere is the hexagon with 
vertices $\pm(1,0),\left(\pm \frac 12, \pm 1\right)$, i.e., 
$\|(a,b)\|_{hex}=\max\{|b|, |a|+|b|/2\}$. Then, 
$(\R^2, \norma_{hex})$ has the \MUP.
\end{lemma}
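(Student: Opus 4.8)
The plan is to reduce the statement to results already established in Section~3. First I would confirm that $\|\cdot\|_{hex}$ is a polygonal norm: the set $\{(a,b):\|(a,b)\|_{hex}=1\}$ is cut out by the two constraints $|b|=1$ (active on the horizontal segments $[-\tfrac12,\tfrac12]\times\{\pm1\}$) and $|a|+|b|/2=1$ (active on the four slanted edges joining $(\pm1,0)$ to $(\pm\tfrac12,\pm1)$), so $S_{X}$ is precisely the hexagon with the six listed vertices. Since this unit sphere is a polygon, Corollary~\ref{polygonal} applies directly and gives that $(\R^2,\|\cdot\|_{hex})$ has the \MUP.

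Alternatively, and perhaps more transparently, I would appeal to Proposition~\ref{segmento}. The top edge $[(-\tfrac12,1),(\tfrac12,1)]$ lies in $S_X$, and its length in the hexagonal norm is $\|(\tfrac12,1)-(-\tfrac12,1)\|_{hex}=\|(1,0)\|_{hex}=\max\{0,1\}=1$. Hence $S_X$ contains a segment of length at least $1$, and Proposition~\ref{segmento} yields the \MUP\ at once.

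I do not expect any genuine obstacle: the whole content is the elementary computation $\|(1,0)\|_{hex}=1$ (equivalently, that the top edge has unit length), after which the conclusion is immediate from Proposition~\ref{segmento}, or from Corollary~\ref{polygonal}. We isolate it as a separate lemma only because this particular hexagonal norm, equipped with the \MUP, will serve as the two-dimensional building block in the three-dimensional construction that follows.
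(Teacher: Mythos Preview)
Your proposal is correct and matches the paper's own proof, which simply notes that the sphere has six length-$1$ segments and then invokes Proposition~\ref{segmento}. Your additional remark that Corollary~\ref{polygonal} would equally suffice is valid and does not conflict with the paper's approach.
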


\begin{proof}
This norm has six length-1 segments in it sphere, so this lemma follows 
immediately from Proposition~\ref{segmento}.
\end{proof}

Let us recall the definition of modulus of convexity, see~\cite{day}, p. 328: 

\begin{definition}
A normed linear space $B$ is called {\em uniformly convex} if for each $\e, 
0<\e\leq 2$, there is a $\delta(\e)>0$ such that $\|b_1+b_2\|\leq 2(1-\delta(\e))$ 
if $\|b_1-b_2\|\geq\e$ and $\|b_1\|=\|b_2\|=1$; the function $\delta$ is called 
the {\em modulus of convexity of} $B$. 
\end{definition}

\noindent and the following result, Theorem 4.1 in the same outstanding work, where 
$\delta_2$ is the modulus of convexity of the Euclidean space, please observe that 
$\delta_2$ does not depend on the dimension of the space:

\begin{theorem}[Day,~{\cite[Theorem 4.1]{day}}]\label{ThDayDay}
$B$ is uniformly convex with a modulus of convexity satisfying the inequality 
$\delta(\e)\geq\delta_2(\e)$ for $0<\e\leq 2$ if and only if $B$ is an inner-product 
space and $\delta$ is identically equal to $\delta_2$. 
\end{theorem}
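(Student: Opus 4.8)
The plan is to establish the two implications of the equivalence separately; essentially all the work lies in the ``only if'' direction.

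For the ``if'' direction, suppose $B$ is an inner-product space. For unit vectors $b_1,b_2$ with $\|b_1-b_2\|=\e$, the parallelogram identity gives $\|b_1+b_2\|^2=2\|b_1\|^2+2\|b_2\|^2-\e^2=4-\e^2$, hence $1-\|\tfrac12(b_1+b_2)\|=1-\sqrt{1-\e^2/4}$ for \emph{every} such pair; therefore $\delta(\e)=1-\sqrt{1-\e^2/4}$, which is exactly the Euclidean value $\delta_2(\e)$. Thus $B$ is uniformly convex with $\delta\equiv\delta_2$, and in particular $\delta\ge\delta_2$.

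For the ``only if'' direction I would first translate the hypothesis into a purely metric inequality on $S_B$. Since $\delta_2$ is increasing and $\delta_2(\e)=1-\sqrt{1-\e^2/4}$, requiring $\delta(\e)\ge\delta_2(\e)$ for all $\e\in(0,2]$ is the same as requiring $1-\|\tfrac12(b_1+b_2)\|\ge\delta_2(\|b_1-b_2\|)$ for every pair of unit vectors $b_1,b_2$; squaring and rearranging, this is equivalent to
$$\|b_1+b_2\|^2+\|b_1-b_2\|^2\le 4\qquad\text{for all }b_1,b_2\in S_B,$$
and, by homogeneity, to the parallelogram inequality $\|x+y\|^2+\|x-y\|^2\le 2\|x\|^2+2\|y\|^2$ whenever $\|x\|=\|y\|$. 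The decisive step is then to upgrade this to the \emph{full} parallelogram inequality, valid for all $x,y\in B$. Since being an inner-product space is a two-dimensional property --- it holds as soon as the parallelogram law holds on every two-dimensional subspace --- and since the modulus of convexity of a subspace is at least that of the ambient space, one may assume $\dim B=2$; there the displayed inequality is a genuine constraint on the shape of the unit disc, and I would remove the equal-norm restriction using compactness of $S_B$ together with the central symmetry of the unit ball (for fixed $u,v\in S_B$, controlling the convex function $t\mapsto\|u+tv\|^2+\|u-tv\|^2$ on $[0,1]$ and feeding the hypothesis in at the appropriate scales). This extension is the heart of the argument and the step I expect to be the main obstacle.

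Once the full parallelogram inequality is available, the conclusion is formal. Writing $P(x,y)=\|x+y\|^2+\|x-y\|^2-2\|x\|^2-2\|y\|^2$, so that $P\le 0$ identically, the substitution $(x,y)\mapsto\big(\tfrac12(x+y),\tfrac12(x-y)\big)$ yields the identity $P\big(\tfrac12(x+y),\tfrac12(x-y)\big)=-\tfrac12 P(x,y)$; since the left-hand side is $\le 0$, we get $P(x,y)\ge 0$, hence $P\equiv 0$. Thus the parallelogram law holds identically, so $B$ is an inner-product space by the Jordan--von Neumann theorem, and the ``if'' direction then gives $\delta\equiv\delta_2$, completing the plan.
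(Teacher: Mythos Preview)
The paper does not prove this theorem at all: it is quoted verbatim from Day's article and used as a black box to derive Corollary~\ref{ThDay}. So there is no ``paper's own proof'' to compare your attempt against; I can only assess your proposal on its merits.

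Your ``if'' direction and the reduction of the hypothesis $\delta\ge\delta_2$ to the inequality
\[
\|b_1+b_2\|^2+\|b_1-b_2\|^2\le 4\qquad(b_1,b_2\in S_B)
\]
are correct, and the Schoenberg duality $P\big(\tfrac12(x+y),\tfrac12(x-y)\big)=-\tfrac12 P(x,y)$ is a clean way to finish \emph{once} you have $P(x,y)\le 0$ for \emph{all} $x,y$.

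The genuine gap is exactly the step you flag as ``the main obstacle'': passing from the equal-norm inequality $P(x,y)\le 0$ for $\|x\|=\|y\|$ to the unrestricted inequality. Your sketch does not close it. The function $f(t)=\|u+tv\|^2+\|u-tv\|^2$ is indeed convex, but convexity together with $f(0)=2$ and $f(1)\le 4$ only yields $f(t)\le 2+2t$, whereas you need the strictly stronger bound $f(t)\le 2+2t^2$ on $(0,1)$; the difference $f(t)-(2+2t^2)$ need not be convex because of the $-2t^2$ term, so ``controlling the convex function'' gives you nothing beyond the linear chord. Nor does the Schoenberg substitution help directly: applied to pairs with $\|x\|=\|y\|$ it produces pairs with $\|x'+y'\|=\|x'-y'\|$, a different constraint, so you cannot bootstrap. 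As it stands, the proposal is an outline with its central lemma left unproved; Day's original argument (or a detour through Nordlander's inequality $\delta\le\delta_2$ plus an equality analysis) is needed to fill this hole.
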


Since the modulus of convexity is defined just by means of some distances between 
points in the unit sphere, the characterisation of the Euclidean norm given 
by Day gives us: 

\begin{coro}\label{ThDay}
Every $\R^n$ endowed with the Euclidean norm has the Mazur-Ulam Property.
\end{coro}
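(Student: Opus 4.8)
The plan is to reduce the statement to Theorem~\ref{infinited} by exhibiting, for an arbitrary surjective isometry $\tauSXX$ of the Euclidean sphere, a relative open set of fixed points; the rigidity of Theorem~\ref{ThDayDay} (Day) will do the real work of pinning down the norm on the target side.

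First I would set up the reduction. Fix $n$ and write $E=(\R^n,\norma_2)$. Suppose $\nX'$ is any norm on $\R^n$ and $\tau:S_2\to S_{X'}$ is an onto isometry, where $S_2$ is the Euclidean sphere; we want $\tau$ linear, equivalently (after identifying bases as in Lemma~\ref{fundamental}) $\tau=\Id$ and $\nX'=\norma_2$. The key observation is that the modulus of convexity $\delta_{X'}$ of $\nX'$ is computed purely from the function $(u,v)\mapsto\|u+v\|_{X'}$ with $\|u-v\|_{X'}$ prescribed and $u,v\in S_{X'}$, and since $\tau$ is a surjective isometry between $S_2$ and $S_{X'}$, for $u,v\in S_2$ we have $\|\tau(u)-\tau(v)\|_{X'}=\|u-v\|_2$ and, using Tingley's Theorem (Theorem~\ref{Tingley}), $\|\tau(u)+\tau(v)\|_{X'}=\|\tau(u)-\tau(-v)\|_{X'}=\|u-(-v)\|_2=\|u+v\|_2$. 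Hence, taking infima over the relevant configurations, $\delta_{X'}(\e)=\delta_2(\e)$ for all $0<\e\le 2$; in particular $\delta_{X'}\ge\delta_2$, so Day's Theorem~\ref{ThDayDay} forces $\nX'$ to be an inner-product norm with modulus exactly $\delta_2$.

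Next I would upgrade ``$\nX'$ is Euclidean'' to ``$\nX'=\norma_2$ after the standard basis identification''. Pick any orthonormal basis $\{x_1,\dots,x_n\}$ of $(\R^n,\norma_2)$ lying in $S_2$ and pass to coordinates via $\phi_X$ with respect to this basis on the source and via $\phi_Y$ with respect to $\{\tau(x_1),\dots,\tau(x_n)\}$ on the target; by Tingley's Theorem the latter is a basis of the target space. Since $\nX'$ is an inner-product norm and $\tau$ is a distance-preserving bijection with $\tau(x_i)$ pairwise at distance $\sqrt 2$ (being isometric images of an orthonormal set), the vectors $\tau(x_i)$ are orthonormal for $\nX'$, so in the $\phi_Y$-coordinates $\nX'$ is exactly the standard Euclidean norm on $\R^n$. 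Thus $\tau':=\phi_Y\circ\tau\circ\phi_X^{-1}$ is an onto isometry of the standard Euclidean sphere onto itself, fixing each $e_i$, and in this picture the target norm equals the source norm.

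Finally I would invoke Theorem~\ref{infinited}: $(\R^n,\norma_2)$ is strictly convex, $\nX'$ in the coordinate picture is an equivalent (indeed equal) norm, and $\tau'$ fixes the standard basis. To apply the theorem I need $\tau'$ to have a set of fixed points with nonempty interior. For this I would argue as in the proof of Theorem~\ref{twod}: each point $z\in S_2$ is determined among points of $S_2$ by its $2n$ distances $\|z\pm e_i\|_2$ (this is immediate from the inner-product structure, or one can cite the Monotonicity Lemma as in the two-dimensional case), and $\tau'$ preserves all these distances because it fixes each $e_i$ and its Tingley-antipode; hence $\tau'=\Id$ on all of $S_2$, so certainly its fixed-point set has nonempty interior. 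Theorem~\ref{infinited} then gives $\tau'=\Id$ and $\nX'=\norma_2$ in coordinates, whence by Lemma~\ref{fundamental} the original $\tau$ is the restriction of a linear isometry. Since $\YnY$ was arbitrary, $(\R^n,\norma_2)$ has the Mazur-Ulam Property. I expect the main obstacle to be purely bookkeeping: making sure the modulus-of-convexity identity $\delta_{X'}=\delta_2$ is extracted correctly from $\tau$ (in particular using Tingley's Theorem to transfer the ``$\|u+v\|$'' half of the definition), and being careful that Day's theorem is applied in the direction ``$\delta\ge\delta_2\Rightarrow$ Euclidean'' rather than its easy converse.
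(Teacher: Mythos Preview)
Your proposal is correct and follows the same route as the paper: the modulus of convexity is a sphere-metric invariant (Tingley's Theorem handles the $\|u+v\|$ half), so Day's Theorem~\ref{ThDayDay} forces the target norm to be an inner-product norm, after which the extension is straightforward. Your final appeal to Theorem~\ref{infinited} is harmless but redundant --- you have already established $\tau'=\Id$ outright from the fact that the distances $\|z\pm e_i\|_2$ determine $z\in S_2$, so there is no need to pass through the fixed-point-set machinery.
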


Corollary~\ref{ThDay} is not new, it can be found both in~\cite{MoriOzawa19} and \cite{BCFP18}. 

This Lemma will come in handy for the proof of Example~\ref{bolibic}.

\begin{lemma}\label{BisBisBis}
Let $y_1,y_2,y_3\in S_Y$ be such that $y_i=\Bis(y_j)\cap \Bis(y_k)$ whenever 
$\{i,j,k\}=\{1,2,3\}$. Then, they are linearly independent. 
\end{lemma}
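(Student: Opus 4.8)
The statement concerns three points $y_1,y_2,y_3$ in the sphere of a normed space $Y$ with the property that each is the intersection of two symmetric bisectors of the others, and asks to conclude linear independence. The plan is to argue by contradiction: suppose $y_1,y_2,y_3$ lie in a common two-dimensional subspace $Z\subset Y$ (if they spanned a line, the hypothesis fails trivially since distinct bisectors of distinct points on a line cannot intersect in a point of that line consistently), and derive a contradiction inside $Z$. So I would first reduce to the planar case, noting that $\Bis(y_j)\cap Z$ is still a bisector-type set in $Z$ and the hypothesis $y_i=\Bis(y_j)\cap\Bis(y_k)$ survives when everything happens in $Z$.

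Once in the plane $Z=(\R^2,\|\cdot\|)$, the key observation is geometric: the condition $y_i\in\Bis(y_j)$ means $y_i$ and $y_j$ are isosceles orthogonal, i.e. $\|y_i-y_j\|=\|y_i+y_j\|$. So the hypothesis says all three pairs $\{y_i,y_j\}$ are mutually isosceles orthogonal. In a two-dimensional normed space one expects that one cannot have three unit vectors that are pairwise isosceles orthogonal while being ``genuinely'' three distinct directions, because isosceles orthogonality, though symmetric, is not transitive and there is essentially a one-parameter family of directions, so pinning down a second direction orthogonal to a fixed $y_1$ leaves only a discrete choice, and requiring it simultaneously orthogonal to a third over-determines the configuration. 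Concretely, I would use that $\Bis(y_1)$ is a curve (a centrally symmetric, connected set meeting $S_Z$ in exactly two antipodal points when $\|y_1\|=1$), parametrize $S_Z$, and show that the map sending a direction $x$ to ``the'' point of $\Bis(x)\cap S_Z$ on a chosen side is well-behaved enough that $y_2$ is forced once $y_1$ is fixed, and then $y_3$ being forced by both $y_1$ and $y_2$ collapses the configuration so that two of the $y_i$ coincide or are antipodal — contradicting that they are three points each equal to an intersection of the other two's bisectors (which in particular forces them pairwise distinct and non-antipodal).

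The cleaner route, which I would actually pursue, exploits the extra rigidity in the hypothesis: it is not merely that the pairs are isosceles orthogonal, but that $y_i$ is the \emph{unique} point of $\Bis(y_j)\cap\Bis(y_k)$ — an intersection of two curves giving a single point. In the plane, $\Bis(y_1)$ and $\Bis(y_2)$ are two centrally symmetric sets; if $y_1,y_2,y_3$ were coplanar the three bisectors $\Bis(y_1),\Bis(y_2),\Bis(y_3)$ would all pass through a common structure and a counting/symmetry argument (each pair meets in $\pm y_k$, and central symmetry forces an even intersection pattern) yields that the three bisectors are ``concurrent'' in a way incompatible with the three $y_i$ being distinct. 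I would make this precise by showing that $y_1,y_2,y_3,-y_1,-y_2,-y_3$ would all have to lie on $\Bis(y_1)\cup\Bis(y_2)\cup\Bis(y_3)$ and then tracking that $\Bis(y_1)$, being the bisector of a symmetric segment, separates the plane so that $y_2$ and $y_3$ (both isosceles orthogonal to $y_1$) lie on it, whence $\|y_2-y_3\|$ and $\|y_2+y_3\|$ are constrained, and combined with $y_2,y_3$ isosceles orthogonal this forces $y_2=\pm y_3$.

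\textbf{Main obstacle.} The delicate point is turning the intuition ``isosceles orthogonality is not transitive, so three mutually orthogonal directions overdetermine a plane'' into a rigorous statement valid for \emph{arbitrary} (possibly non-smooth, non-strictly-convex) norms on $\R^2$: bisectors can themselves contain segments, so $\Bis(y_j)\cap\Bis(y_k)$ being a single point $\{y_i\}$ is a genuine hypothesis doing real work, and I must be careful that the ``parametrization of directions'' and the ``the point on one side'' arguments do not secretly assume strict convexity or smoothness. I expect the author handles this by a short topological/separation argument using the central symmetry of symmetric bisectors together with the uniqueness built into the hypothesis, rather than any smoothness, and that is the step I would need to get exactly right.
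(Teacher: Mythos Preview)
Your overall structure --- argue by contradiction, restrict to the two-dimensional span, and observe that the hypothesis yields three unit vectors that are pairwise isosceles orthogonal --- is exactly the paper's route. The difference is that the paper does not try to prove the planar impossibility itself: it simply cites Corollary~2.4 of \cite{jiliwu2011} for the fact that a two-dimensional normed space cannot have three mutually isosceles-orthogonal points in its unit sphere, and the proof ends there in two lines.

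Your ``main obstacle'' is therefore the entire content of the lemma beyond the reduction, and your worry about it is justified: this planar fact is nontrivial for general (non-strictly-convex, non-smooth) norms, and none of the arguments you sketch is complete. The parametrization idea and the claim that ``$\Bis(y_1)$ meets $S_Z$ in exactly two antipodal points'' already fail without strict convexity; the counting/symmetry argument is too vague to evaluate; and the final assertion ``combined with $y_2,y_3$ isosceles orthogonal this forces $y_2=\pm y_3$'' is stated without proof. Note also that the paper uses only the consequence $y_i\in\Bis(y_j)$ for all $i\neq j$; the uniqueness built into the equality $y_i=\Bis(y_j)\cap\Bis(y_k)$, which you try to leverage in your ``cleaner route'', plays no role in the paper's argument. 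If you want a self-contained proof you would need to supply a genuine argument for the cited result; otherwise, citing it (as the paper does) is the efficient path.
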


\begin{proof}
Suppose they are linearly dependent and consider the plane $H=\langle y_1,y_2,y_3\rangle$. 
The restriction of $\nY$ to $H$ is again a norm, so we have a two-dimensional 
normed space with three mutually isosceles orthogonal points in its unit sphere. 
This cannot happen because of \cite{jiliwu2011}, Corollary~2.4, so we are done. 
\end{proof}

\begin{example}\label{bolibic}
Let $\norma_X$ be the norm on $\R^3$ whose unit sphere is the revolution around the $x$-axis 
of the hexagon with vertices $\pm(1,0),\left(\pm \frac 12, \pm 1\right)$. Then, $(\R^3, \nX)$ has the \MUP. 
\end{example}

\begin{proof}[Proof of the Example.]
We will not use the explicit form of $\nX$, but it is 
$$\|(a,b,c)\|_X=\max\{\|(b,c)\|_2,|a|+\|(b,c)\|_2/2\}.$$
Suppose that there exist $\YnY$ and an onto isometry $\tauSXY$. 
Consider the vectors in the usual basis of $\R^3$, $e_1, e_2, e_3\in S_X$ 
and their images $y_1=\tau(e_1), y_2=\tau(e_2), y_3=\tau(e_3)$. 

It is clear that $e_3\in\Bis(e_1)\cap\Bis(e_2)$ and $e_2\in\Bis(e_1)$, so 
Lemma~\ref{BisBisBis} implies that $y_1,y_2,y_3$ are linearly independent. We 
may consider $Y$ endowed with the basis $\B_Y=\{y_1,y_2,y_3\}$ and we have, 
in coordinates, 
$$\tau(1,0,0)=(1,0,0),\ \tau(0,1,0)=(0,1,0),\ \tau(0,0,1)=(0,0,1).$$ 

Consider the symmetric bisector $\Bis(e_1)=\{(a_1,a_2,a_3)\in S_X:a_1=0\}$, it is 
isometric to the Euclidean unit sphere of $\R^2$. This also happens with 
$$C_X=\{x\in S_X:\|x-(1,0,0)\|_X=1\}=S_X\cap((1,0,0)+S_X)$$ 
and $C_Y=\tau(C_X)$, and the latest agrees with $S_Y\cap((1,0,0)+S_Y)$. 

Take $x\in C_X$ and $\tau(x)=(a,b,c)\in C_Y$. As its distance to $(1,0,0)$ is 1, 
we have $(a-1,b,c)\in S_Y$. We also have $\|(a-1,b,c)-(-1,0,0)\|_Y=1$, and this 
means that $(a-1,b,c)\in \tau(-C_X)=-C_Y$. Now, as the only point in $-C_X$ at 
distance 1 from $x$ is $x-(1,0,0)$, it must be $\tau(x-(1,0,0))=(a-1,b,c)\in -C_Y$. 
On the one hand, this means that the sphere $S_Y$ includes the segment $[(a,b,c),(a-1,b,c)]$. 
On the other hand, $S_Y$ must include also every other segment of the (planar) 
hexagon with vertices 
$$\{(1,0,0),(a,b,c),(a-1,b,c),(-1,0,0),(-a,-b,-c),(1-a,-b,-c),(1,0,0)\}$$
and this hexagon is the image of the hexagon whose vertices are
$$\{(1,0,0),x,x-(1,0,0),(-1,0,0),-x,(1,0,0)-x,(1,0,0)\}.$$
In particular this implies that the restriction of $\tau$ to this last hexagon 
is linear, so for this restriction to be the identity it just need to have some 
fixed point $x'\neq(\pm 1,0,0)$. This happens with both $(0,1,0)$ and $(0,0,1)$, 
so every point in, say, the horizontal and vertical hexagons is fixed. This implies 
that $\|(a,b,0)\|_Y=\|(a,b,0)\|_X$ and $\|(a,0,c)\|_Y=\|(a,0,c)\|_X$ 
for $a, b, c\in \R$, and so 
$$\|(a,b,c)-(a',b',c)\|_Y=\|(a,b,c)-(a',b',c)\|_X \ \mathrm{and\ }$$ 
$$\|(a,b,c)-(a',b,c')\|_Y=\|(a,b,c)-(a',b,c')\|_X$$ for every 
$a, b, c, a', b', c'\in \R$. Actually, the restrictions of $\nX$ and $\nY$ to 
the vertical and horizontal planes agree with $\|\cdot\|_{hex}$. 

\begin{claim} 
Let $z=(0,z_2,z_3), z'=(0,z_2',z_3')\in \Bis((1,0,0))\cap S_Y$. Then 
$\frac{z-z'}{\|z-z'\|_Y}$ also belongs to $\Bis((1,0,0))\cap S_Y$. 
\end{claim}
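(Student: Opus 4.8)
The plan is to deduce the Claim from the sharper statement that $\Bis((1,0,0))\cap S_Y$ is exactly the plane section $\{(0,s,t)\colon s,t\in\R\}\cap S_Y$ (coordinates with respect to $\B_Y$). Granting that, the Claim is immediate: if $z=(0,z_2,z_3)$ and $z'=(0,z_2',z_3')$ lie in that plane then so does the vector $z-z'$, hence so does $\frac{z-z'}{\|z-z'\|_Y}$, which being a $\nY$-unit vector then lies in $\{(0,s,t)\}\cap S_Y=\Bis((1,0,0))\cap S_Y$.

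To prove the sharper statement I would proceed in three steps. (a) Since $\tau$ is an onto isometry with $\tau(-w)=-\tau(w)$ (Tingley), it transports bisectors: $\tau(\Bis(a,b)\cap S_X)=\Bis(\tau(a),\tau(b))\cap S_Y$ for all $a,b\in S_X$, because $\|w-a\|_X=\|w-b\|_X$ holds exactly when $\|\tau(w)-\tau(a)\|_Y=\|\tau(w)-\tau(b)\|_Y$. Taking $a=(1,0,0)$, $b=-(1,0,0)$ and recalling that $\Bis((1,0,0))\cap S_X=\{(0,s,t)\colon s^2+t^2=1\}$, we get $\Gamma:=\Bis((1,0,0))\cap S_Y=\tau\bigl(\Bis((1,0,0))\cap S_X\bigr)$; in particular $\Gamma$ contains $(0,\pm1,0)=\pm\tau(e_2)$ and $(0,0,\pm1)=\pm\tau(e_3)$. (b) For each $x\in C_X$ the map $\tau$ is affine on the flat hexagon determined by $(1,0,0)$ and $x$ and satisfies $\tau(x-(1,0,0))=\tau(x)-(1,0,0)$, so it sends the midpoint $(0,\cos\theta,\sin\theta)$ of the edge $[x,x-(1,0,0)]$ to the midpoint $\tau(x)-\tfrac12(1,0,0)$ of the image edge; letting $x$ range over $C_X$ gives $\Gamma=C_Y-\tfrac12(1,0,0)$, whence $C_Y$ contains the four points $(\tfrac12,\pm1,0)$, $(\tfrac12,0,\pm1)$. (c) Since $S_X$ is the union of the flat hexagons determined by $(1,0,0)$ and the points of $C_X$, its image $S_Y$ is the union of the corresponding image hexagons; reading off their edges, $S_Y$ is the cone with apex $(1,0,0)$ over $C_Y$, together with the cylinder in direction $(1,0,0)$ joining $C_Y$ to $C_Y-(1,0,0)=-C_Y$ through $\Gamma$, together with the cone with apex $-(1,0,0)$ over $-C_Y$.

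It remains to pin down the plane of $\Gamma$. The cone with apex $(1,0,0)$ over $C_Y$ is a piece of the boundary of the convex body $B_Y$, and on it the first coordinate is an affine function along every ray issuing from the apex; convexity of $B_Y$ forces this height function to be concave on $\conv(C_Y)$, and a function that is concave and affine along rays is superadditive — impossible unless $C_Y$ is planar. The four points of $C_Y$ singled out in (b) affinely span the plane $\{(\tfrac12,s,t)\colon s,t\in\R\}$, so that must be the plane of $C_Y$; therefore $\Gamma=C_Y-\tfrac12(1,0,0)\subset\{(0,s,t)\colon s,t\in\R\}$. Finally $\Gamma$ is a topological circle (a homeomorphic image of $\Bis((1,0,0))\cap S_X$) sitting inside the topological circle $\{(0,s,t)\}\cap S_Y$, so the two coincide, which is the sharper statement.

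The step I expect to be the real obstacle is showing that $C_Y$ is planar, i.e.\ that a cone over a genuinely non-planar closed curve cannot lie on the boundary of a convex body; this is geometrically transparent but needs the short superadditivity argument above (concretely, any non-affine odd perturbation of the radial profile of $C_Y$ produces a height function that fails to be concave). Everything else is routine: transporting bisectors along $\tau$, reading off the edges of the hexagons on which $\tau$ is already known to be affine, and the observation that a topological circle admits no proper topological subcircle.
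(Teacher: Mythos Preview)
Your strategy inverts the paper's logic: you want to prove the stronger statement $\Bis((1,0,0))\cap S_Y=\{(0,s,t)\}\cap S_Y$ first and read the Claim off as a triviality, whereas in the paper the Claim is precisely the tool used to obtain that stronger statement. That is a legitimate plan, but then the entire burden falls on your step~(c), and that step does not go through.

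The assertion ``convexity of $B_Y$ forces $C_Y$ to be planar'' is false. Take, in the $\B_Y$-coordinates, the curve
\[
C=\bigl\{\bigl(\tfrac12+\epsilon(\sin\theta+\sin 3\theta),\cos\theta,\sin\theta\bigr):\theta\in[0,2\pi]\bigr\}
\]
for small $\epsilon>0$. This curve contains the four points $(\tfrac12,\pm1,0),(\tfrac12,0,\pm1)$, satisfies $-C=C-(1,0,0)$, and is not planar (the $\sin 3\theta$ term prevents it from lying in any affine plane). The body bounded by the two cones from $\pm(1,0,0)$ over $\pm C$ together with the connecting cylinder is
\[
\bigl\{(s,b,c):\ r\le 1,\ \big|s-\epsilon r f(\theta)\big|\le 1-\tfrac r2\bigr\},\qquad r=\sqrt{b^2+c^2},
\]
with $f(\theta)=\sin\theta+\sin 3\theta$. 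A Hessian computation shows the relevant upper and lower bounds are concave in $(b,c)$ as soon as $\epsilon|f+f''|\le\tfrac12$, i.e.\ $\epsilon\le\tfrac1{16}$, so this body \emph{is} convex. Hence convexity alone (even together with the cone/cylinder structure and the four distinguished points) does not force $C_Y$ planar; your ``concave and affine along rays $\Rightarrow$ superadditive $\Rightarrow$ planar'' heuristic is simply incorrect, and the concrete remark that ``any non-affine odd perturbation produces a height function that fails to be concave'' is refuted by this example.

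What your argument is missing is the isometry $\tau$ itself. The paper's proof of the Claim uses it in an essential way: for $z,z'$ as in the statement one pulls the points $(t,z_2,z_3)$ and $(t',z_2',z_3')$ back through $\tau^{-1}$ to the corresponding cylinder segments in $S_X$, where $\|\cdot\|_X$ is manifestly even in the first coordinate, and this yields the symmetry $\|(t,z_2-z_2',z_3-z_3')\|_Y=\|(-t,z_2-z_2',z_3-z_3')\|_Y$. From this one reads off that $z''$ is the midpoint of its segment in $S_Y$, hence lies in $\Bis((1,0,0))$. No purely convex-geometric argument about $B_Y$ can replace this use of $\tau$.
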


{\em Proof of the claim:} Let $z''=(0,z''_2,z_3'')=\frac{z-z'}{\|z-z'\|_Y}$. 
It belongs to the bisector $\Bis((1,0,0))$ if and only if 
it does not belong to any segment that contains $\pm(1,0,0)$ and 
it is the midpoint of its segment, and this is what we will see. 

Let $t,t'\in [-1/2,1/2]$. We have 
$$\|(t,z_2,z_3)-(t',z_2',z_3')\|_Y=\|(t',z_2,z_3)-(t,z_2',z_3')\|_Y,$$ 
and this implies that $\|\cdot\|_Y$ is symmetric in the segment 
$$[(-1,z_2-z_2',z_3-z_3'),(1,z_2-z_2',z_3-z_3')],\text{ i.e.}, $$
$$\|(t,z_2-z_2',z_3-z_3')\|_Y=\|(-t,z_2-z_2',z_3-z_3')\|_Y\text{ for } t\in [0,1].$$ 
In particular, as $\lambda=\|(0,z_2-z_2',z_3-z_3')\|_Y\leq 2$, we have 
$$\|(1/2,(z_2-z_2')/\lambda,(z_3-z_3')/\lambda)\|_Y=
\|(-1/2,(z_2-z_2')/\lambda,(z_3-z_3')/\lambda)\|_Y.$$ 
This readily implies that the claim holds. 

\vspace{0.2cm}

Let $V$ be the set of points in $S_X\cap\Bis(e_1)$ whose images' first coordinates vanish. 
The Claim implies that for every couple of points 
$x_1, x_2\in V$ there exists another point between them that belongs to $V$ 
too --namely, as $V$ is symmetric, $-x_2\in V$ and the inverse image of 
$(\tau(x_1)-\tau(-x_2))/\|\tau(x_1)+\tau(x_2)\|_Y$ also belongs to $V$. 
As it is obvious that $V$ is closed, we have $V=S_X\cap\Bis(e_1)$, and so
$S_Y\cap\Bis(y_1)=S_Y\cap \{(0,b,c): b, c\in\R\}$. So, $S_Y\cap\Bis(y_1)$ is 
the intersection of a plane and the unit sphere and it is isometric to the 
two-dimensional Euclidean sphere. This, along with Corollary~\ref{ThDay} 
implies that it {\em is} the two-dimensional Euclidean sphere. As 
$\tau(0,1,0)=(0,1,0)$ and $\tau(0,0,1)=(0,0,1)$, we conclude that $\tau$ is 
the identity on $\Bis(e_1)$ and this implies that it is the identity on $S_X$, 
so it is the restriction of a linear isometry. 
\end{proof}

\begin{remark}
Please observe that, whenever $\XnX$ is not strictly convex, 
there is some nonlinear isometric embedding $\R\to X$ (see~\cite{BakerIso}). 
If we still consider $\R^3$ endowed with the norm defined in Example~\ref{bolibic}, 
we have a nonlinear isometric embedding $i:(\R^2,\norma_2)\to(\R^3,\nX)$, where
$$i(x_1,x_2)=(f(x_1,x_2),x_1,x_2)\text{ and }f(x_1,x_2)=1/2\|(x_1,x_2)-(x_1',x_2')\|_2.$$ 

Actually, any $f:(\R^2,\norma_2)\to\R$ such that $f(0,0)=0$ and 
$$f(x_1,x_2)-f(x_1',x_2')\leq 1/2\|(x_1,x_2)-(x_1',x_2')\|_2$$
for every $(x_1,x_2),(x_1',x_2')\in\R^2$ would have done the trick, this example 
is just a slight modification of the last one in the same paper~\cite{BakerIso}. 

On the other hand, in the same work it is proved that every isometric embedding 
$\YnY\to\XnX$ is affine when $X$ is strictly convex. It seems that strict convexity 
can be key for the existence of nonlinear embeddings also in the spheres setting, 
so Baker's results led us to consider the following: 
\end{remark}

\begin{conjecture}
$X$ is strictly convex if and only if every isometric embedding $S_Y\to S_X$ 
extends linearly. 
\end{conjecture}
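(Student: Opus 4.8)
This statement is an ``if and only if'', and the two implications are of very different natures; I would dispose of the reverse one first, as it is elementary, and it is the exact sphere-analogue of the fact recalled just above that every non strictly convex space admits a nonlinear isometric embedding $\R\to X$.

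\textbf{``$\Leftarrow$'' (contrapositive): if $X$ is not strictly convex, some isometric embedding $S_Y\to S_X$ does not extend linearly.} Pick $a,b\in S_X$ with $a\ne b$ and $\|a+b\|_X=2$ (possible precisely because $X$ is not strictly convex); then $a\ne-b$, since otherwise $\|a+b\|_X=0$. Set $u=a$, $v=-b$, so $\|u-v\|_X=\|a+b\|_X=2$ while $v\ne-u$ (and $u\ne v$). Take $Y=\R$ with the absolute value, so $S_Y=\{-1,1\}$, whose two points are at distance $2$; hence $\tau(1)=v$, $\tau(-1)=u$ defines an isometric embedding $\tau:S_Y\to S_X$. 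Any linear isometric embedding $\R\to X$ sends $1$ to some $w\in S_X$ and $-1$ to $-w$, so $\tau$ could extend linearly only if $v=-u$, which we have excluded. Thus $S_X$ receives an isometric embedding from some sphere that does not extend linearly, proving the implication.

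\textbf{``$\Rightarrow$'': if $X$ is strictly convex, every isometric embedding $\tau:S_Y\to S_X$ extends linearly.} The first step is that $\tau$ is automatically odd: for $y\in S_Y$ we have $\tfrac12\|\tau(y)+(-\tau(-y))\|_X=\tfrac12\|\tau(y)-\tau(-y)\|_X=\tfrac12\|y-(-y)\|_Y=1$, so the midpoint of the unit vectors $\tau(y)$ and $-\tau(-y)$ lies on $S_X$, and strict convexity forces $\tau(-y)=-\tau(y)$. Hence the positively homogeneous extension $\wtau:Y\to X$ given by $\wtau(ry)=r\tau(y)$ for $r\ge0$, $y\in S_Y$, and $\wtau(0)=0$, is in fact homogeneous, $\wtau(ry)=r\tau(y)$ for all $r\in\R$. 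The plan is then to prove that $\wtau$ is an isometry; granting this, Baker's theorem that an isometric embedding of a normed space into a strictly convex space is affine (\cite{BakerIso}) makes $\wtau$ affine, and $\wtau(0)=0$ makes it linear, which is exactly the required linear extension of $\tau=\wtau|_{S_Y}$. (Conversely, any linear extension of $\tau$ must coincide with $\wtau$ and be an isometry, so this reduction loses nothing.) Since $\wtau$ is homogeneous, being an isometry is equivalent to being an isometry near $0$; spelled out, all that is left to prove is the mixed-scale identity $\|s\tau(y_1)-t\tau(y_2)\|_X=\|sy_1-ty_2\|_Y$ for all $y_1,y_2\in S_Y$ and all $s,t>0$, the diagonal case $s=t$ being exactly the hypothesis that $\tau$ is isometric on $S_Y$.

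\textbf{The crux, and the main obstacle.} The mixed-scale identity is, in effect, a Tingley-type statement, which is why the whole assertion is only conjectural. The line of attack I would pursue is to replace $X$ by $Z:=\cl{\span}\,\tau(S_Y)$, still strictly convex as a subspace; to show first that $\tau(S_Y)=S_Z$, so that $\tau:S_Y\to S_Z$ becomes an \emph{onto} isometry between strictly convex spaces; and then to reduce to planar sections, hoping that for a two-dimensional subspace $W\subset Y$ the set $\tau(S_W)$ spans a two-dimensional subspace of $Z$, which would turn $\tau|_{S_W}$ into an onto isometry between two-dimensional strictly convex spheres, accessible through the Monotonicity Lemma and the arguments behind Theorems~\ref{twod} and~\ref{abiertotodo2d}. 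The main obstacle is that none of these reductions is free: a symmetric topological circle $\tau(S_W)\subset S_X$ need not a priori lie in a two-dimensional subspace, and the open-set bootstrapping of Theorems~\ref{abiertotodo2d} and~\ref{abiertotodostrcvx} cannot even be started, there being no region where $\tau$ is already known to be linear. A positive answer to the question recorded in Remark~\ref{remrm} --- that the distances to a relative open subset of the sphere of a strictly convex space determine every point of the space --- would, together with Theorem~\ref{infinited} and the reductions above, settle the crux; short of that, only the case $\dim Y=1$ is clear outright, while already $\dim X=2$ reduces to the still-open Question~\ref{q2d}.
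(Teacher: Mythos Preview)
The statement you are attempting to prove is labelled in the paper as a \emph{Conjecture}; the paper offers no proof of either implication, only the motivating remark (via Baker's result on isometric embeddings into strictly convex spaces) that precedes it. So there is no proof in the paper to compare your proposal against.

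That said, your treatment is essentially accurate and useful. Your proof of the ``$\Leftarrow$'' direction by contrapositive is correct and complete: picking $a\neq b$ in $S_X$ with $\|a+b\|_X=2$, the map $1\mapsto -b$, $-1\mapsto a$ is an isometric embedding of $S_\R=\{-1,1\}$ into $S_X$ that cannot extend linearly since $-b\neq -a$. This is the sphere analogue of the Baker-type example the paper alludes to but does not write down, so here you actually go a little further than the paper.

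For the ``$\Rightarrow$'' direction, your first step (oddness of $\tau$ from strict convexity, since $\|\tau(y)-\tau(-y)\|_X=2$ forces $\tau(-y)=-\tau(y)$) is correct, as is the reduction to showing that the homogeneous extension $\wtau$ is an isometry, after which Baker's theorem would finish. You are right to flag the remaining ``mixed-scale'' identity as the genuine obstacle: it is a Tingley-type statement for embeddings rather than bijections, and nothing in the paper's toolkit (Theorems~\ref{twod}, \ref{abiertotodo2d}, \ref{infinited}, or Proposition~\ref{normdetermines}) bridges that gap, for exactly the reasons you name --- no surjectivity, no open set of known linearity to bootstrap from, and no guarantee that $\tau(S_W)$ sits in a plane. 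Your conclusion that the forward implication remains open is consistent with the paper's own stance in calling this a conjecture.
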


\subsection{A normed curvature}\label{sbscurv}
Before we end this paper, we need to point out a minor result that may lead to 
some interesting questions. Throughout this work, we have tried to show that 
a very important thing to have in mind when dealing with Tingley's Problem is 
the concept of {\em intrinsic metric property}. We have been able to convert 
Day's Theorem~\ref{ThDayDay} into a Mazur-Ulam Property statement just by taking into 
account that the Theorem was stated by means of distances between points in the 
unit sphere. Here we present a kind of generalisation of the curvature of a planar 
curve to curves in normed spaces, somehow in the spirit of Clarkson's modulus of Convexity: 

\begin{definition}\label{Def}
Let $\XnX$ be a normed space, $\gamma:[0,1]\to X$ a curve and $x=\gamma(t)\in X$ 
for some $0<t<1$. Suppose that there is some $c>0$ such that $\gamma\cap (x+\delta S_X)$ 
contains exactly two points for every $0<\delta<c$. 
We define the curvature of $\gamma$ at $x$ measured with $\nX$ as the following limit, 
whenever it exists: 
\begin{equation}\label{Eq}
\mathcal{K}^\gamma_{\nX}(x)=
\sqrt{\lim_{\delta\to 0}\frac{\delta-\|a-a'\|_X/2}{(\delta/2)^3}}= 
2\sqrt{\lim_{a,a'\to x}\frac{2\|x-a\|_X-\|a-a'\|_X}{(\|x-a\|_X)^3}},
\end{equation}
where $a\neq a'$ are the only points in $\gamma$ such that 
$\|x-a\|_X=\|x-a'\|_X=\delta$. 
\end{definition}

\begin{remark}
If the space $X$ is two-dimensional and the curve is its unit sphere, then the $c$ in 
Definition~\ref{Def} can always be chosen to be any $0<c<2$. 

Besides, this curvature features some desirable characteristics: 
\begin{itemize}
\item It is defined just by means of the norm and the curve, so it is generalizable to 
curves in arbitrary normed spaces. 
\item It is defined locally, or even infinitesimally. 
\item It is isometrically invariant. 
\item It is positively antihomogeneous with respect to the norm, i.e., 
$$\mathcal{K}^\gamma_{\lambda\nX}(x)=\frac 1\lambda\mathcal{K}^\gamma_{\nX}(x)$$ 
for every $\lambda>0$. 
\end{itemize}

\end{remark}

$\K$ has also this important feature: 

\begin{theorem}\label{Thm}
This definition includes the notion of the curvature of a sphere in 
$(\R^2,\|\cdot\|_2)$, i.e., $\K^{x+\lambda S_2}_{\norma_2}(x')=1/\lambda$ 
for every $x\in \R^2,\lambda>0$ and $x'\in x+\lambda S_2$.
\end{theorem}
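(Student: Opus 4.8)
The plan is to reduce the statement to a single explicit limit on the unit circle and then evaluate it by elementary trigonometry. First, $\K$ is isometrically invariant, so we may translate and assume $x=0$; moreover, dilating the curve by a factor $\lambda>0$ multiplies every distance $\|x-a\|_X$ and $\|a-a'\|_X$ appearing in~\eqref{Eq} by $\lambda$, and one checks directly from~\eqref{Eq} that this forces $\K^{\lambda\gamma}_{\norma_2}(\lambda x)=\tfrac1\lambda\,\K^{\gamma}_{\norma_2}(x)$ (this is the exact analogue of the antihomogeneity already noted for $\K$). Hence it suffices to prove $\K^{S_2}_{\norma_2}(x')=1$ for every $x'\in S_2$, and by rotational invariance of $\norma_2$ we may take $x'=(1,0)$.

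Second, I would pin down the two equidistant points. Parametrise $S_2$ by $\theta\mapsto(\cos\theta,\sin\theta)$; then $\theta\mapsto\|(\cos\theta,\sin\theta)-(1,0)\|_2=\sqrt{2-2\cos\theta}=2|\sin(\theta/2)|$ is even and strictly increasing on $[0,\pi]$, so for each $\delta\in(0,2)$ the only two points of $S_2$ at distance $\delta$ from $(1,0)$ are $a=(\cos\theta,\sin\theta)$ and $a'=(\cos\theta,-\sin\theta)$, where $\theta\in(0,\pi)$ is determined by $\delta=2\sin(\theta/2)$. In particular the two-point hypothesis of Definition~\ref{Def} holds (one may take $c=2$), and as $\delta\to0$ we have $\theta\to0$ and $a,a'\to(1,0)$.

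Third comes the computation. With $\delta=2\sin(\theta/2)$ we get $\|a-a'\|_2=2\sin\theta=4\sin(\theta/2)\cos(\theta/2)$, hence
\begin{equation*}
\frac{\delta-\|a-a'\|_2/2}{(\delta/2)^3}
=\frac{2\sin(\theta/2)\bigl(1-\cos(\theta/2)\bigr)}{\sin^3(\theta/2)}
=\frac{2\bigl(1-\cos(\theta/2)\bigr)}{\sin^2(\theta/2)}
=\frac{1}{\cos^2(\theta/4)},
\end{equation*}
using $1-\cos(\theta/2)=2\sin^2(\theta/4)$ and $\sin^2(\theta/2)=4\sin^2(\theta/4)\cos^2(\theta/4)$. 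Letting $\delta\to0$, i.e.\ $\theta\to0$, the right-hand side tends to $1$, so $\K^{S_2}_{\norma_2}((1,0))=\sqrt1=1$; by the reduction in the first step, $\K^{x+\lambda S_2}_{\norma_2}(x')=1/\lambda$ for all $x,\lambda,x'$ as claimed.

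There is no real obstacle here: the only points requiring care are the scaling/translation behaviour of~\eqref{Eq} in the reduction step and the bookkeeping with half-angle identities in the last step. One can bypass the closed form entirely by expanding $1-\cos(\theta/2)=\theta^2/8+O(\theta^4)$ and $\sin(\theta/2)=\theta/2+O(\theta^3)$, which yields the same limit $1$ and hence the same value $\K=1$.
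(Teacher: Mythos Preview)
Your proof is correct and follows the same overall architecture as the paper's: reduce by translation, scaling, and rotation to the single case $x'=(1,0)$ on $S_2$, identify the equidistant points as $(\cos\theta,\pm\sin\theta)$, and compute the limit. The substantive difference lies in that computation. The paper works directly with the expression $\frac{\sqrt{2-2\cos t}-\sin t}{\sqrt{(2-2\cos t)^3}}$ and applies L'H\^opital's Rule four times, with a nontrivial side verification that one of the numerators actually tends to~$0$; this occupies about a page. You instead substitute $\delta=2\sin(\theta/2)$ and use half-angle identities to collapse the quotient to the closed form $1/\cos^2(\theta/4)$, from which the limit~$1$ is immediate. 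Your route is considerably shorter and avoids any appeal to L'H\^opital; it also makes transparent that the quotient is defined and finite for every $\theta\in(0,\pi)$, not just in the limit. The paper's approach, by contrast, requires no trigonometric ingenuity and is more mechanical, but at the cost of length and of having to justify each indeterminate form along the way.
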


\begin{proof}
The first we are going to show is that the curvature of the two-dimensional 
Euclidean sphere at every point measured with the Euclidean norm is 1. Let 
$x\in S_2$. 

It is clear from (\ref{Eq}) that our definition of curvature only depends on 
the distances, so every isometry must preserve the curvature. In particular, as 
$S_2$ is isometrically homogeneous, this means that its curvature is constant so 
we may suppose $x=(1,0)$. 

The points near $(1,0)$ are $(\cos(t),\sin(t))$ for $t\in[-\e,\e]$, and the 
points $a, a'$ in~(\ref{Eq}) are $a=(\cos(t),\sin(t)), a'=(\cos(t),-\sin(t))$ 
for some positive $t$. 
We need to evaluate the right hand expression in~(\ref{Eq}). We have 

\begin{eqnarray*}
\begin{aligned}
& \mathcal{K}^{S_2}_{\norma_2}((1,0))= 
2\sqrt{\lim_{a,a'\to x}\frac{2\|x-a\|_X-\|a-a'\|_X}{(\|x-a\|_X)^3}} = 
\\
&  =2\sqrt{\lim_{t\to 0}\frac
{2\|(1,0)-(\cos(t),\sin(t))\|_2-\|(\cos(t),\sin(t))-(\cos(t),-\sin(t))\|_2}
{(\|(1,0)-(\cos(t),\sin(t))\|_2)^3}}= 
\\
& =2\sqrt{\lim_{t\to 0}\frac{2\sqrt{2-2\cos(t)}-2\sin(t)}{\sqrt{(2-2\cos(t))^3}}}= 
\sqrt{8}\sqrt{\lim_{t\to 0}\frac{\sqrt{2-2\cos(t)}-\sin(t)}{\sqrt{(2-2\cos(t))^3}}}. 
\end{aligned}
\end{eqnarray*}

For the sake of clarity, we are going to leave the limit as simple as we can. 
What we will actually compute is 
\begin{eqnarray}\label{limite}
\lim_{t\to 0}\frac{\sqrt{2-2\cos(t)}-\sin(t)}{\sqrt{(2-2\cos(t))^3}},
\end{eqnarray}
we need to show that it equals $1/8$. In the remaining of the proof we will 
avoid the $t\to 0$ and will not recall that $t>0$. As we will make heavy use of 
L'{}Hôpital's Rule, we will write as $\stackrel{*}=$ the equalities given by this Rule. 

We will need this later: 
\begin{eqnarray}\label{limitefacil}
\lim_{t\to 0}\frac{\sin(t)}{\sq22}=1.
\end{eqnarray}
This limit is 1 if and only if 
$$1=\lim\frac{\sin^2(t)}{2-2\cos(t)}\LH \lim\frac{2\sin(t)\cos(t)}{2\sin(t)}=\lim \cos(t),$$
so~(\ref{limitefacil}) holds. 

It is clear that in~(\ref{limite}) both the numerator and the denominator tend to 0. 
In the following there is one case where some explanation is needed, but we will 
leave the explanation for the end of the proof. 
\begin{eqnarray*}
\begin{aligned}
& 
\lim\frac{\sqrt{2-2\cos(t)}-\sin(t)}{\sqrt{(2-2\cos(t))^3}}\LH
\lim\frac{\sin(t)/\sq22-\cos(t)}{3\sin(t)\sq22}=
\\ & 
=\lim\frac{\sin(t)-\cos(t)\sq22}{3\sin(t)(2-2\cos(t))}\LH
\end{aligned}
\end{eqnarray*}
\begin{eqnarray*}
\begin{aligned}
&
=\frac 13\lim\frac{\cos(t)-\sin(t)\cos(t)/\sq22+\sin(t)\sq22}{2\sin^2(t)\cos(t)(2-2\cos(t))}\LH 
\\ &
=\frac 13\lim\frac{-\sin(t)+\sq22 \cos(t) +\frac{2\sin^2(t)-\cos(t)}{\sq22} +\frac{\cos(t)\sin^2(t)}{\sq22^3}}
{2\sin2(t)(4\cos(t)-1)}\LH
\\ &
=\frac 13\lim\frac{-\cos(t)+\frac{\sin(t)\cos(t)}{\sq22}
\left[2+\frac{\cos(t)}{2-2\cos(t)}-\frac{3\sin^2(t)}{(2-2\cos(t))^2}+1+\frac{2\cos(t)}{2-2\cos(t)}+4 \right]}
{-8\sin^2(t)+8\cos^2(t)-2\cos(t)}+
\\ &
+\frac 13 \lim\frac{-3\left(\frac{\sin(t)}{\sq22}\right)^3 -\sin(t)\sq22}
{-8\sin^2(t)+8\cos^2(t)-2\cos(t)}=
\\ &
=\frac 13\lim\frac {-1+\frac{\sin(t)\cos(t)}{\sq22}\cdot\left[7 +\frac{3\cos(t)}{2-2\cos(t)}-\frac{3\sin^2(t)}{(2-2\cos(t))^2} \right]
-3\left(\frac{\sin(t)}{\sq22}\right)^3 }6=
\\ &
=\lim\frac { \frac{\sin(t)\cos(t)}{\sq22}\left(\frac{\cos(t)}{2-2\cos(t)}-\frac{\sin^2(t)}{(2-2\cos(t))^2}\right) +1}{6}.
\end{aligned}
\end{eqnarray*}
\vspace{2mm}

\noindent
So, what we need right now is to show that 
\begin{eqnarray*}
\begin{aligned}
-\frac 14&=\lim\frac{\sin(t)\cos(t)}{\sq22}
\left(\frac{\cos(t)}{2-2\cos(t)}-\frac{\sin^2(t)}{(2-2\cos(t))^2}\right)=
\\ &
=\lim\frac{\cos(t)(2-2\cos(t))-\sin^2(t)}{(2-2\cos(t))^2}=
\lim\frac{2\cos(t)-2\cos^2(t)-\sin^2(t)}{4+4\cos^2(t)-8\cos(t)}=
\\ &
=\lim\frac{2\cos(t)-\cos^2(t)-1}{4+4\cos^2(t)-8\cos(t)}=
\lim\frac{-(1-2\cos(t)+\cos^2(t))}{4(1-2\cos(t)+\cos^2(t))},
\end{aligned}
\end{eqnarray*}

\vspace{2mm}
\noindent so we have proved that~(\ref{limite}) holds. The last application of 
L'{}Hôpital's Rule is the only tricky one, here we show that the numerator 
tends to 0. The only problem could be here: 

\begin{eqnarray*}
\lim\left(\frac{2\sin^2(t)-\cos(t)}{\sq22} +\frac{\cos(t)\sin^2(t)}{\sq22^3}\right)
\end{eqnarray*}

As we have~(\ref{limitefacil}), this reduces to 

\begin{eqnarray*}
\lim\!\left(\frac{-\cos(t)}{\sq22} +\frac{\cos(t)\sin^2(t)}{\sq22^3}\right)\!=
\lim\!\left(\!\cos(t)\frac{-(2-2\cos(t))+\sin^2(t)}{\sq22^3}\right)\LH
\end{eqnarray*}
\begin{eqnarray*}
=\lim\frac{2\sin(t)\cos(t)-2\sin(t)}{3\sin(t)\sq22}
=\lim\frac{2\cos(t)-2}{3\sq22}\LH
\lim\frac{-2\sin(t)}{3\sin(t)/\sq22} 
\end{eqnarray*}
and the last limit is just 
$$-\frac 23\sq22\to 0,$$
so the first part of the proof is finished. 

The usual curvature of $\lambda S_2$ is $1/\lambda$ at every point for each 
$\lambda>0$ and, as the formula given for $\K^\gamma_{\nX}$ is clearly positively 
antihomogeneous, it is clear that 
$$\K^{\lambda S_2}_{\norma_2}(\lambda x)=\frac 1\lambda$$
for every $x\in S_2$, so both curvatures agree at every point of any centred 
sphere. Both are translation invariant, so they agree at every sphere and 
we have finished the proof. 
\end{proof}

\section{Aknowledgments}

Supported in part by DGICYT project MTM2016$\cdot$76958$\cdot$C2$\cdot$1$\cdot$P 
(Spain) and Junta de Extremadura programs GR$\cdot$15152 and  IB$\cdot$16056. 

\section*{References}

\bibliography{elsReflectionTingley}

\end{document}